\def\R{\mathbb R}
\def\Z{\mathbb Z}
\def\F{\mathbb F}
\def\d{\mathrm d}
\def\ee{\varepsilon}
\def\wt{\widetilde}
\def\ve{\mathbf}
\def\vol{\operatorname{vol}}
\def\Int{\operatorname{Int}}
\newtheorem{theorem}{Theorem}[section]
\newtheorem{lemma}[theorem]{Lemma}
\newtheorem{proposition}[theorem]{Proposition}
\newtheorem*{nonumtheorem}{Theorem}
\newtheorem{corollary}[theorem]{Corollary}
\newtheorem{conjecture}[theorem]{Conjecture}
\theoremstyle{remark}
\newtheorem{remark}[theorem]{Remark}
\theoremstyle{definition}
\theoremstyle{remark}
\numberwithin{equation}{section}
\begin{document}

\begin{frontmatter}[classification=text]
%% EDITOR: this will force the keywords to appear right after the Abstract.
%%   If the abstract is too long and would force the keywords off the
%%   front page, please comment out % [classification=text] above
%%   This way the keywords will be floated on the bottom of the first page
%%   even though the Abstract spills over to the next page.

%%% AUTHOR: Title goes here.  This line is optional.  You must use it
%%   if title has footnote attached or requires nontrivial typesetting,
%%   e.g., inclusion of linebreaks to force nice layout.
%\title{Short Proof of R\"odl's $n^{\log\log n}$ Bound\footnote{This is a footnote to the title}} %% please capitalize all significant words

%%% AUTHOR:
%%% List all authors. If you wish, place grant acknowledgements in \thanks.
%%% In brackets include a short tag for each author.
\author[xs]{Xuancheng Shao\thanks{Supported by the NSF grant DMS-1802224.}}

%%% AUTHOR: Abstract goes here
\begin{abstract}
We deduce, as a consequence of the arithmetic removal lemma, an almost-all version of the Balog-Szemer\'{e}di-Gowers theorem. It states that for any $K\geq 1$ and $\ee > 0$, there exists $\delta = \delta(K,\ee)>0$ such that the following statement holds: if $|A+_{\Gamma}A| \leq K|A|$ for some $\Gamma \geq (1-\delta)|A|^2$, then there is a subset $A' \subset A$ with $|A'| \geq (1-\ee)|A|$ such that $|A'+A'| \leq |A+_{\Gamma}A| + \ee |A|$. We also discuss the quantitative bounds in this statement, in particular showing that when $A \subset \Z$, the dependence of $\delta$ on $\ee$ cannot be polynomial for any fixed $K>2$.
\end{abstract}
\end{frontmatter}

%%% AUTHOR: body of paper starts here

\section{Introduction}

Let $G$ be an abelian group, and let $A \subset G$ be a finite set. The sumset $A+A$ is defined by
\[ A+A = \{a+a' \colon a,a' \in A\}. \]
A central subject in additive combinatorics is to study the structure of sets $A$ with small sumset. It has emerged that, in some applications, one only has information about a restricted version or a popular-sum version of the complete sumset $A+A$. For $\Gamma \subset A \times A$, we define the restricted sumset
\[ A+_{\Gamma}A = \{a+a' \colon (a,a') \in \Gamma\}. \]
The following natural question arises: if $A+_{\Gamma}A$ is small, can we still obtain structural information on the set $A$? The Balog-Szemer\'{e}di-Gowers theorem (see~\cite[Theorem 2.29]{TV06}) answers this question in the affirmative, by producing a subset $A' \subset A$ with positive density (depending on the density of $\Gamma$ in $A\times A$), such that the complete sumset $A'+A'$ is small.

\begin{nonumtheorem}[Balog-Szemer\'{e}di-Gowers]
Let $G$ be an abelian group, and let $A, B \subset G$ be two subsets. Let $\Gamma \subset A \times B$ be a subset with $|\Gamma| \geq |A||B|/K'$ for some $K' \geq 1$. If $|A+_{\Gamma} B| \leq K|A|^{1/2}|B|^{1/2}$ for some $K \geq 1$, then there exist subsets $A' \subset A$ and $B' \subset B$ such that
\[ |A'| \geq \frac{|A|}{4\sqrt{2}K'}, \ \ |B'| \geq \frac{|B|}{4K'}, \ \ \hbox{and}\ \ |A'+B'| \leq 2^{12} (K')^4 K^3 |A|^{1/2}|B|^{1/2}. \] 
\end{nonumtheorem}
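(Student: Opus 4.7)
The plan is to prove this via the graph-theoretic approach of Gowers, which reduces the additive statement to a path-counting lemma in the bipartite graph $\Gamma$, followed by a direct signed-triple representation argument. Throughout, I view $\Gamma \subset A \times B$ as a bipartite graph of density at least $1/K'$; the hypothesis $|A+_\Gamma B| \leq K|A|^{1/2}|B|^{1/2}$ says that the sum function $(a,b) \mapsto a+b$ takes few distinct values on the edges.

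First I would establish a purely graph-theoretic lemma: there exist subsets $A' \subset A$ and $B' \subset B$ with $|A'| \geq |A|/(4\sqrt{2}K')$ and $|B'| \geq |B|/(4K')$ such that for every $(a,b) \in A' \times B'$, the number of paths $a \sim b'' \sim a'' \sim b$ in $\Gamma$ (with $b'' \in B$ and $a'' \in A$) is at least $P \gtrsim |A||B|/(K')^4$. The standard route is: throw away from $A$ those vertices whose $\Gamma$-degree is much less than $|B|/K'$; then include $a \in A$ in $A'$ only if, for most $a^\ast \in A$, the pair $(a,a^\ast)$ has at least $\gtrsim |B|/(K')^2$ common $\Gamma$-neighbors in $B$ (a Cauchy--Schwarz step on the adjacency matrix, followed by a Markov step); finally restrict $B$ to vertices with many good neighbors in $A'$. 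Each restriction costs a bounded factor, and balancing the thresholds produces the stated sizes together with the lower bound on $P$.

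Second, I would convert each path into an additive representation. For any $(a,b) \in A' \times B'$ and any path $a \sim b'' \sim a'' \sim b$ in $\Gamma$, the identity
\[ a+b = (a+b'') - (a''+b'') + (a''+b) \]
writes $a+b$ as $s_1 - s_2 + s_3$ with each $s_i \in A+_\Gamma B$. Thus every element of $A'+B'$ admits at least $P$ representations as such a signed triple, while the total number of signed triples is at most $|A+_\Gamma B|^3 \leq K^3(|A||B|)^{3/2}$. Dividing yields $|A'+B'| \leq K^3(|A||B|)^{3/2}/P$, which upon inserting the lower bound on $P$ gives the claimed $2^{12}(K')^4 K^3 |A|^{1/2}|B|^{1/2}$.

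The main obstacle is the first step: arranging the graph lemma so that $|A'|$, $|B'|$, and the per-pair path count $P$ are \emph{simultaneously} as large as the statement demands, since these quantities trade off against each other in the choice of Markov thresholds. The factor $4\sqrt{2}$ traces back to Cauchy--Schwarz applied to the degree distribution, and the $2^{12}$ accumulates constant losses from the successive trimmings; neither factor is optimized in the statement.
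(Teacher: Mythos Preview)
The paper does not prove this theorem; it states it as background and cites \cite[Theorem~2.29]{TV06}, so there is no ``paper's own proof'' to compare against. Your outline is the standard Gowers path-of-length-three argument, which is precisely the proof given in the cited reference, and it is correct as sketched: the graph-theoretic lemma (refine $A$ by degree, then by a Cauchy--Schwarz/Markov step on common neighbours, then refine $B$) together with the identity $a+b = (a+b'') - (a''+b'') + (a''+b)$ yields the bound $|A'+B'| \leq |A+_\Gamma B|^3/P$.
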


This paper focuses on an ``almost all'' version (or $99\%$ version) of the Balog-Szemer\'{e}di-Gowers theorem. More precisely, if $\Gamma$ in the statement above is almost all (as opposed to just a positive proportion) of $A \times B$, can we take the sets $A',B'$ in the conclusion to be almost all of $A,B$, and moreover can we ensure that the sumset $A'+B'$ is just a little larger than $A+_{\Gamma}B$? We show that the answer to both questions is yes.

\begin{theorem}\label{thm:99bsg}
Let $G$ be an abelian group, and  let $A,B \subset G$ be two subsets with $|A| = |B| = N$. Let $K \geq 1$ and $\ee \in (0,1/2)$, and let $\delta > 0$ be sufficiently small in terms of $K,\ee$. Let $\Gamma \subset A \times B$ be a subset with $|\Gamma| \geq (1-\delta)N^2$. If $|A+_{\Gamma}B| \leq KN$, then there exist subsets $A'\subset A$ and $B' \subset B$ such that
\[ |A'| \geq (1-\ee)N, \ \ |B'| \geq (1-\ee)N, \ \ \hbox{and} \ \ |A'+B'| \leq |A+_{\Gamma}B| + \ee N. \]
Moreover, if $G = \F_p^n$ with $p$ fixed then we may take $\delta = (\ee/K)^{O_p(1)}$.
\end{theorem}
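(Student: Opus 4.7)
The plan is to apply the three-variable arithmetic removal lemma for the equation $x+y=z$ to the sets $A$, $B$, and the set of ``bad sums''
\[ T = (A+B) \setminus (A +_{\Gamma} B). \]
Write $S = A +_{\Gamma} B$, so that $A+B = S \sqcup T$ and $|S| \leq KN$ by hypothesis. Every pair $(a,b) \in \Gamma$ contributes an element of $S$, so any pair $(a,b) \in A \times B$ with $a+b \in T$ must lie outside $\Gamma$. Hence the number of triples $(a,b,t) \in A \times B \times T$ satisfying $a+b = t$ is at most $|(A \times B)\setminus \Gamma| \leq \delta N^2$.

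I would then invoke the arithmetic removal lemma in a suitable finite abelian group. When $G = \F_p^n$, a random projection to a subspace of dimension $O(\log N)$ provides, with positive probability, a Freiman $2$-isomorphism of $A \cup B \cup T$ into an ambient group of size $O(N^C)$; for a general abelian group, a standard Freiman embedding places $A \cup B \cup T$ inside a cyclic group of size $O(N)$ while preserving the relation $a+b=t$. For $\delta$ sufficiently small in terms of $\ee$ and $K$, the removal lemma then yields subsets $A' \subset A$, $B' \subset B$, and $T' \subset T$ with
\[ |A \setminus A'|, \ |B \setminus B'|, \ |T \setminus T'| \leq \ee N \]
such that no triple $(a,b,t) \in A' \times B' \times T'$ satisfies $a+b=t$. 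This last condition means precisely that $A' + B' \subset S \cup (T \setminus T')$, and therefore
\[ |A' + B'| \leq |S| + |T \setminus T'| \leq |A +_{\Gamma} B| + \ee N, \]
which is the desired bound, while the size constraints on $A'$ and $B'$ are immediate.

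The main obstacle lies not in the logical reduction above, which is short, but in the quantitative strength of the removal lemma being invoked, which governs the dependence of $\delta$ on $\ee$. For a general abelian group one only has the tower-type bound of Kr\'{a}l'--Serra--Vena, which suffices for the qualitative part of the statement. To obtain the polynomial bound $\delta = (\ee/K)^{O_p(1)}$ in the ``moreover'' clause for $G = \F_p^n$, one must instead apply the polynomial arithmetic removal lemma in $\F_p^n$ derived from the Croot--Lev--Pach / Ellenberg--Gijswijt cap-set bounds (as in the work of Fox--Lov\'{a}sz), and verify that the Freiman-embedding step into a group of size polynomial in $N$ does not degrade the parameter dependence. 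The role of $K$ in this quantitative bound enters through $|S| \leq KN$, which controls the size of the set of ``permitted'' sums and must be absorbed into the removal lemma's parameters.
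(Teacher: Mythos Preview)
Your reduction to the removal lemma via $T=(A+B)\setminus(A+_{\Gamma}B)$ is exactly right and matches the paper's Proposition~\ref{prop:equiv}. The gap is in the embedding step. You assert that ``a standard Freiman embedding places $A\cup B\cup T$ inside a cyclic group of size $O(N)$'', but Freiman/Ruzsa modelling requires bounded doubling, and the hypothesis $|A+_{\Gamma}B|\leq KN$ gives no control whatsoever on $|A+B|$ or on $|T|$: even with $|\Gamma|\geq(1-\delta)N^2$, the full sumset $A+B$ can be enormous. Without that control you cannot embed into a group of size $O_K(N)$, and if you apply the removal lemma in any ambient group $\wt G$ with $|\wt G|\gg N$, it only permits deleting $\ee|\wt G|$ elements, which is far more than $\ee N$.

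The paper supplies the missing step via Lemma~\ref{lem:99BSG-weak} (a ``weak'' $99\%$ BSG): one first passes to subsets $A_0\subset A$, $B_0\subset B$ of size $\geq(1-\delta^{1/2})N$ with $|A_0+B_0|\leq 2K^3N$. Only then does $X=A_0\cup B_0\cup(C\cap(A_0+B_0))$ have doubling $K^{O(1)}$, so that Proposition~\ref{prop:freiman-model} embeds $X$ with density $c(K)$ in a finite group and the removal lemma becomes effective. Your $\F_p^n$ suggestion has the same defect: a random projection to dimension $O(\log_p N)$ cannot be a Freiman $2$-isomorphism on $A\cup B\cup T$ when $|A+B|$ is uncontrolled (sums will collide), and embedding into a group of size $N^C$ with $C>1$ gives density $N^{1-C}$, forcing $\delta$ to depend on $N$ rather than just on $\ee,K$. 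The paper instead uses the same weak-BSG step followed by Ruzsa's embedding (Proposition~\ref{prop:freiman-model-poly}), which is what keeps the dependence polynomial in $\ee/K$.
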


The proof of this theorem is not difficult. In fact, it is closely related to an equivalent formulation of the arithmetic removal lemma; see Proposition~\ref{prop:equiv} below. Consequently, the quantitative dependence of $\delta$ on $K,\ee$ that we are able to obtain in Theorem~\ref{thm:99bsg} is the same as that in the arithmetic removal lemma.

As an immediate consequence of Theorem~\ref{thm:99bsg}, we derive the following structure theorem for sets of integers whose restricted doubling is less than $3$.

\begin{corollary}\label{cor:doubling-3}
Let $A, B\subset \Z$ be two subsets with $|A| = |B| = N$. Let $\ee > 0$, and let $\delta > 0$ be sufficiently small in terms of $\ee$. Let $\Gamma \subset A \times B$ be a subset with $|\Gamma| \geq (1-\delta)N^2$. If $|A+_{\Gamma}B| \leq (3-\ee)N - 4$, then there exist arithmetic progressions $P,Q$ with the same common difference and sizes at most $|A+_{\Gamma}B| - (1-\ee)N + 1$, such that $|A \cap P| \geq (1-\ee)N$ and $|B \cap Q| \geq (1-\ee)N$.
\end{corollary}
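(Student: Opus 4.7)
My plan is to derive the corollary from Theorem~\ref{thm:99bsg} combined with a standard Freiman-type $3k-4$ theorem in the integers. The essential observation is that after refining $A$ and $B$ using Theorem~\ref{thm:99bsg}, the surviving sets have full sumset barely larger than $3N$, which is exactly the regime handled by the classical $3k-4$ theorem.

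First I would apply Theorem~\ref{thm:99bsg} in $G = \Z$ with $K = 3$ (valid since the hypothesis gives $|A+_\Gamma B| \leq 3N$) and with the parameter of that theorem replaced by a smaller $\ee' := \ee/4$. Taking $\delta$ sufficiently small in terms of $\ee$, this produces $A' \subset A$ and $B' \subset B$ with $|A'|, |B'| \geq (1-\ee/4)N$ and
\[
|A'+B'| \;\leq\; |A+_{\Gamma}B| + \tfrac{\ee}{4}N \;\leq\; (3 - \tfrac{3\ee}{4})N - 4.
\]

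Next I would apply a two-set Freiman $3k-4$ theorem (due to Freiman, in the form of Lev-Smeliansky) to $A'$ and $B'$. The required hypothesis $|A'+B'| \leq |A'|+|B'|+\min(|A'|,|B'|) - 3$ is satisfied because the right-hand side is at least $3(1-\ee/4)N - 3 = (3 - \tfrac{3\ee}{4})N - 3$, which exceeds our bound on $|A'+B'|$ by exactly $1$. The conclusion produces arithmetic progressions $P \supset A'$ and $Q \supset B'$ with the same common difference, satisfying $|P| \leq |A'+B'| - |B'| + 1$ and $|Q| \leq |A'+B'| - |A'| + 1$.

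Substituting the bounds back in,
\[
|P| \;\leq\; |A+_\Gamma B| + \tfrac{\ee}{4}N - (1 - \tfrac{\ee}{4})N + 1 \;=\; |A+_\Gamma B| - (1 - \tfrac{\ee}{2})N + 1 \;\leq\; |A+_\Gamma B| - (1-\ee)N + 1,
\]
and identically for $|Q|$. Moreover $A \cap P \supset A'$ gives $|A \cap P| \geq (1 - \ee/4)N \geq (1-\ee)N$, and similarly for $B \cap Q$. The main obstacle, beyond this parameter bookkeeping, is invoking a version of the two-set $3k-4$ theorem that delivers APs with a common difference and with the precise length bounds above; the classical $A = B$ case is Freiman's theorem, while the unequal-sizes version with a common difference appears in work of Lev-Smeliansky, and the form just above is exactly what the corollary requires.
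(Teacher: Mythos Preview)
Your proposal is correct and follows essentially the same route as the paper: apply Theorem~\ref{thm:99bsg} with a shrunken $\ee$ (the paper uses $\ee/10$, you use $\ee/4$) and then feed the resulting $A',B'$ into the asymmetric Freiman $3k-4$ theorem of Lev--Smeliansky to obtain the desired arithmetic progressions. The only cosmetic difference is that the paper equalizes $|A'|=|B'|$ before invoking the $3k-4$ theorem, whereas you keep them possibly unequal and use the two-set form directly; both work.
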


By comparison, \cite[Theorem 1.1]{SX18} shows that one can take $\delta = c\ee^2$ for some absolute constant $c>0$, under the more restrictive assumption that $|A+_{\Gamma}B| \leq \left(\tfrac{3+\sqrt{5}}{2} - \ee\right)N$. Can one take $\delta$ to depend polynomially on $\ee$ in Corollary~\ref{cor:doubling-3}? We believe that this should be possible.

\begin{conjecture}\label{conj:doubling-3}
In Corollary~\ref{cor:doubling-3} one can take $\delta = c\ee^C$ for some absolute constants $c,C>0$.
\end{conjecture}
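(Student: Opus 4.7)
My plan to attack Conjecture~\ref{conj:doubling-3} is to bypass the arithmetic removal lemma entirely---which is essential, since the non-polynomial bounds in Theorem~\ref{thm:99bsg} (for $K>2$) are intrinsic to the removal-lemma approach---and instead exploit the rigidity of the integer setting and the threshold $|A+_{\Gamma}B|<3N$. Specifically, I would prove a quantitative almost-all analogue of Freiman's $3k-4$ theorem for restricted sumsets with polynomial dependence on $\delta$.

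The scheme proceeds in three steps. \emph{Step 1:} degree-regularize by removing at most $O(\delta^{1/2})N$ vertices from each of $A$ and $B$ to obtain $A^{*},B^{*}$ in which every surviving vertex has $\Gamma$-degree at least $(1-\delta^{1/2})N$. \emph{Step 2 (the main step):} prove the reduction
\[ |A^{*}+B^{*}| \le |S| + O(\delta^{c})\,N \qquad \text{where } S:=A+_{\Gamma}B \]
for some absolute $c>0$. \emph{Step 3:} given Step 2, choose $\delta=\ee^{C}$ with $C$ large so that $|A^{*}+B^{*}|\le (3-\ee/2)N-4$; the two-set $3k-4$ theorem of Freiman--Lev--Grynkiewicz then yields arithmetic progressions $P\supseteq A^{*}$ and $Q\supseteq B^{*}$ with common difference, of lengths at most $|A^{*}+B^{*}|-N+1\le |S|-(1-\ee)N+1$, which is the conclusion of the corollary.

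The main obstacle is Step 2. Writing $L=(A+B)\setminus S$ and letting $r(x)$ denote the number of representations of $x$ in $A\times B$, a straightforward count gives $\sum_{x\in L}r(x)\le \delta N^{2}$; hence the elements of $L$ with $r(x)\ge \delta^{1/2}N$ contribute at most $\delta^{1/2}N$ to $|L|$, which is already harmless. The danger lies in the \emph{sparse} outliers $x\in L$ with $r(x)<\delta^{1/2}N$, which could in principle accumulate: they must be eliminated by further pruning of $A^{*},B^{*}$ while losing no more than $\ee N$ additional vertices. To handle them I would first use Freiman's theorem to show that any set $S$ of integers with $|S|<3N$ admitting a $\Gamma$-dense restricted representation is close to being itself an arithmetic progression plus a lower-order correction, and then show that any sparse outlier in $A^{*}+B^{*}$ must lie in this correction set, which is polynomially small in $\delta$.

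A concrete realization of this programme would generalize the methods of~\cite{SX18} (which reach the threshold $(3+\sqrt{5})/2$) all the way up to the full Freiman range $3N$. The new ingredient I would aim for is a quantitative stability version of Freiman's $3k-4$ theorem for restricted sumsets: if $|A+_{\Gamma}A|\le 3N-4$ and $|\Gamma|\ge (1-\delta)N^{2}$, then after removing $O(\delta^{c})N$ elements, $A$ lies in an arithmetic progression of the expected length. Proving this stability theorem with polynomial $\delta$-dependence---without first bounding $|A+A|$ globally---is, I believe, where the true difficulty lies, and the absence of $\F_{p}^{n}$-type counterexamples in the $<3N$ integer regime is the principal source of hope that it is possible.
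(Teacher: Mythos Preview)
The paper does not prove this statement: it is explicitly presented as an open conjecture, and the text immediately following it reads ``We are unable to settle Conjecture~\ref{conj:doubling-3}.'' What the paper \emph{does} establish in this direction is the negative result Theorem~\ref{thm:non-poly}, showing that for $G=\Z$ and any $K>2$ the dependence of $\delta$ on $\ee$ in Theorem~\ref{thm:99bsg} cannot be polynomial, so the conjecture cannot be obtained by passing through Theorem~\ref{thm:99bsg}. You correctly anticipate this and propose to bypass the removal lemma.

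That said, your proposal is a research programme rather than a proof, and you are candid about this. The entire content is hidden in Step~2. Your splitting of $L=(A+B)\setminus S$ into popular sums ($r(x)\ge\delta^{1/2}N$) and sparse sums is standard and disposes of the popular part trivially, but the sparse part is the whole problem. The sketch you give for it---first show $S$ is close to an arithmetic progression up to a ``lower-order correction'', then argue sparse outliers lie in that correction---is circular: showing that $S$ (equivalently $A$, $B$) is polynomially close to an AP is essentially the conjecture itself. Invoking an extension of~\cite{SX18} from the threshold $(3+\sqrt{5})/2$ up to $3$ is an honest description of what would suffice, but no mechanism for achieving it is offered, and this is precisely the barrier the paper leaves open.

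In summary: there is no proof in the paper to compare against, and your proposal does not supply one either. The gap is that Step~2 is asserted, and the outline given for it does not reduce the problem to anything strictly easier than the conjecture.
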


In fact, one may even hope that $\delta = c\ee^2$ should work. It is easy to see that this would be best possible. Indeed, let $\ee > 0$ be small. Take 
\[ A = B = \{n\colon 1 \leq n \leq (1-2\ee)N\} \cup \{n\colon 1.1N \leq n \leq (1.1+2\ee)N\}, \]
and form $\Gamma$ by removing the $(2\ee N)^2$ pairs $(n,n')$ with $1.1N \leq n,n' \leq (1.1+2\ee)N$. Then $|A+_{\Gamma}A| = 2.1N-1$. On the other hand, if $P$ is an arithmetic progression that shares all but at most $\ee N$ elements of $A$, then $P$ must contain all but at most $\ee N$ elements of the interval from $1$ to $(1.1+2\ee)N$, and so $|P| > |A+_{\Gamma}A| - (1-\ee)N + 1$. Hence one must have $\delta \leq O(\ee^2)$ in Corollary~\ref{cor:doubling-3}.

We are unable to settle Conjecture~\ref{conj:doubling-3}, but we show that  Theorem~\ref{thm:99bsg} does not hold with polynomial bounds if $G = \Z$, so that one cannot settle Conjecture~\ref{conj:doubling-3} purely via Theorem~\ref{thm:99bsg}.

\begin{theorem}\label{thm:non-poly}
In Theorem~\ref{thm:99bsg}, if $G = \Z$ then for any $K>2$ and $\ee > 0$ with $D := \ee^{-1}\min(K-2,1)$ sufficiently large, we must have
\[ \delta \leq \exp\left(-c (\log D)^2 (\log\log D)^{-2}\right) \]
for some absolute constant $c>0$.
\end{theorem}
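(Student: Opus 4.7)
My plan is to construct explicit counterexamples to Theorem~\ref{thm:99bsg} in $G = \Z$ using Behrend's classical construction of 3-AP-free sets, mirroring the standard Behrend-type lower bound for the arithmetic removal lemma in $\Z$ (which is natural in view of the equivalence announced in Proposition~\ref{prop:equiv}).

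I would fix a 3-AP-free set $S \subset [1, M]$ with $|S| = \sigma M$ and $\log(1/\sigma) \leq c_0 \sqrt{\log M}$ from Behrend's theorem, choose $n$ of order $M$ and a shift $\lambda$ large enough that the three blocks of $A + B$ are pairwise disjoint, and set $A = B = \{1, 2, \ldots, n\} \cup (\lambda + S) \subset \Z$. I then take $\Gamma$ to exclude only the $|S|$ diagonal pairs $\{(\lambda + s, \lambda + s) : s \in S\}$, so that $|\Gamma| = N^2 - |S|$ and $\delta = |S|/N^2 \asymp \sigma/M$. The 3-AP-free property ensures that each doubled-diagonal sum $2\lambda + 2s$ has only the diagonal representation in $A \times B$ (an alternative $(\lambda + s_1) + (\lambda + s_2)$ with $s_1 \neq s_2$ would give a non-trivial 3-AP $s_1, s, s_2$ in $S$), so $A +_\Gamma B = (A + B) \setminus (2\lambda + 2S)$ and $|A +_\Gamma B| \leq K N$ for a bounded $K$ (with $\min(K - 2, 1) = 1$, hence $D = 1/\ee$). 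To show the conclusion of Theorem~\ref{thm:99bsg} fails once $\ee < c_1 \sigma$, I argue that any $A', B' \subset A$ of density $\geq 1 - \ee$ satisfies $|T| \geq |S| - 2\ee N$ for $T := S \cap (A' - \lambda) \cap (B' - \lambda)$, and the distinct sums $\{2\lambda + 2t : t \in T\}$ contribute to $A' + B' \setminus (A +_\Gamma B)$; combined with standard sumset lower bounds showing that $A' + B'$ misses at most $O(\ee N)$ elements from the other two blocks, this yields $|A' + B'| \geq |A +_\Gamma B| + |S| - O(\ee N) > |A +_\Gamma B| + \ee N$.

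Setting $\ee = c_1 \sigma$ then gives $D \asymp 1/\sigma$, and Behrend's bound $\log(1/\sigma) \lesssim \sqrt{\log M}$ implies $\log M \gtrsim (\log D)^2$; since $\log(1/\delta) \asymp \log(M/\sigma) \gtrsim \log M$, we obtain $\log(1/\delta) \gtrsim (\log D)^2$, which is in fact stronger than the target bound $\log(1/\delta) \geq c (\log D)^2 (\log \log D)^{-2}$ (the $(\log\log D)^{-2}$ factor serves as safety margin and arises naturally if one uses Elkin's refinement of Behrend in place of the classical version).

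The main obstacle I anticipate is the careful verification of the lower bound $|A' + B'| \geq |A +_\Gamma B| + |S| - O(\ee N)$. Concretely, one needs a representation-counting argument showing that the sumset $A_2' + B_2' \subset S + S$ loses only $O(\ee N)$ elements when $A_2', B_2'$ are $\ee N$-dense subsets of $S$, which exploits that in Behrend's sphere construction most sums in $S + S$ have many representations and hence are robust to small removals. A secondary issue is extending the construction uniformly to all $K > 2$ rather than only $K \geq 3$ (which the construction naturally gives); this is handled by re-tuning the interval length $n$ relative to the block size $M$.
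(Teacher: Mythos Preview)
Your overall scheme --- a Behrend-type block glued to an interval, with $\Gamma$ deleting the pairs that hit the doubled diagonal --- is exactly the one the paper follows. The gap is precisely where you locate it, and your suggested resolution does not work.

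You propose to control the Behrend block via the heuristic that ``most sums in $S+S$ have many representations and hence are robust to small removals''. The paper carries out this popularity argument for the thin annulus $S\subset\R^d$ (Lemma~\ref{lem:ry-bound}) and then explicitly observes, in the remark following that lemma, that it is \emph{not enough}: the representation set $R_{\ve y}=S\cap(\ve y-S)$ has volume only of order $\eta^2 t^{(d-1)/2}$ when $\|\ve y\|=2-t$, so requiring $\vol(R_{\ve y})>\ee\,\vol(S)$ secures $\ve y\in S'+S'$ only once $t\gtrsim \ee^{2/(d+1)}$, a bound that degenerates as $d\to\infty$. For the classical Behrend set on an \emph{exact} sphere the annulus has thickness zero and the situation is no better. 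The paper's actual fix is twofold: (i) abandon the $3$-AP-free sphere in favour of an annulus with merely \emph{few} $3$-APs (Lemma~\ref{lem:3ap-size} and the remark after it), and (ii) supplement the popularity bound with the disjointness observation of Lemma~\ref{lem:close-y-intersect} --- if $R_{\ve y_1}\cap R_{\ve y_2}\neq\emptyset$ then $\ve y_1,\ve y_2$ are close --- which drives a packing argument in Proposition~\ref{prop:s'+s'}: a maximal disjoint family $R_{\ve y_1},\dots,R_{\ve y_m}$ among the missing sums must each absorb a definite share of $S\setminus S'$, bounding $m$, and the missing sums are then covered by $m$ small balls. This packing step is the genuine new idea and is absent from your outline.

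Two further points. Your claim that the mixed block $[n]+(\lambda+S)$ loses only $O(\ee N)$ sums via ``standard sumset lower bounds'' is optimistic: deleting the $\ee N$ smallest elements of $\lambda+S$ can kill roughly $\ee N/\sigma$ edge sums, which already exceeds the budget $|S|\approx\sigma N$ when $\ee\asymp\sigma$; the paper handles the analogous term $U_2$ via property~(5) of Proposition~\ref{prop:non-poly-large-K}, again proved from the annulus geometry. And the $(\log\log D)^{-2}$ factor is not a safety margin: with the paper's parameters $\delta=2^{-d^2/3}$ and $\ee=\lambda d^{-Cd}$ one has $\log D\asymp d\log d$, so $(\log D)^2/(\log\log D)^2\asymp d^2\asymp\log(1/\delta)$. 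Removing that factor would be a genuine improvement over the paper, not a reformulation of it.
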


Our construction to prove Theorem~\ref{thm:non-poly} is motivated by, but different from, Behrend's construction of a large set of integers without $3$-term arithmetic progressions. Let $A \subset [M]$ be the $3$-AP-free Behrend set, so that $|A| \geq \exp(-C(\log M)^{1/2}) M$. Let $\Gamma \subset A \times A$ be the set of all non-diagonal pairs (i.e. those pairs $(a,a')$ with $a \neq a'$), so that $\delta = |A|^{-1}$. Then $A+_{\Gamma}A$ misses all elements $2a$ ($a \in A$) by the $3$-AP-free property, so that $|A+_{\Gamma}A| \leq |A+A| - |A|$.  If one removes $\ee |A|$ elements from $A$ to form a subset $A'$, one might guess that $A'+A'$ is smallest if the $\ee |A|$ elements removed are from an initial interval $\{1,2,\cdots,L\}$ for some $L$. In this case, $A'+A'$ should more-or-less be $(A+A)\setminus \{1,2,\cdots, 2L\}$, and so we should have $|A'+A'| \geq |A+_{\Gamma}A| + \ee |A|$ if we take $L = 0.1|A|$ (say). Since $\ee$ is the proportion of elements from $A$ lying in $\{1,2,\cdots, L\}$, it should be of the form $\ee \approx L/M  \approx \exp(-C(\log M)^{1/2})$. Since $\delta \approx 1/M$, this should show that we must have
\[ \delta \leq \exp\left(-c \left(\log \tfrac{1}{\ee}\right)^2 \right) \]
in Theorem~\ref{thm:non-poly} when $G = \Z$. We will make this argument rigorous by constructing $A$ as a discretized version of a thin annulus in $\R^d$ (with large $d$), and we use a trick to make the doubling constant $K$ as close to $2$ as possible. Our analysis leads to an extra  $\log\log\ee^{-1}$ factor, which we do not know how to remove.

\subsection*{Notation} We use $O(X)$ to denote a quantity which is bounded in magnitude by $CX$ for some absolute constant $C>0$. We also write $Y \ll X$ or $X \gg Y$ for the estimate $|Y| = O(X)$.

\section{Preliminaries}

\subsection{Dense models for sets with small doubling}

Recall that a map $\pi \colon G \to \wt{G}$ from one abelian group $G$ to another  is a Freiman isomorphism on $A \subset G$, if it is injective on $A$ and moreover
\[ a_1 + a_2 = a_3 + a_4 \Longleftrightarrow \pi(a_1) + \pi(a_2) = \pi(a_3) + \pi(a_4), \]
for all $a_1,a_2,a_3,a_4 \in A$. In that case we say that $A$ is Freiman isomorphic to $\pi(A)$. Using Freiman's theorem, one can show that any set with small doubling is Freiman isomorphic to a dense set in a finite abelian group (see~\cite[Proposition 1.2]{GR07}).

\begin{proposition}\label{prop:freiman-model}
Let $G$ be an abelian group, and let $A \subset G$ be a finite subset. If $|A+A| \leq K|A|$ for some $K \geq 1$, then one can find a finite abelian group $\wt{G}$ and a subset $\wt{A} \subset \wt{G}$ which is Freiman isomorphic to $A$, with $|\wt{A}| \geq c(K) |\wt{G}|$ for $c(K) = (20K)^{-10K^2}$.
\end{proposition}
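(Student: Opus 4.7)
The plan is to combine a Freiman-type structure theorem for sets of small doubling with a standard embedding of coset progressions into finite abelian groups. First, the Pl\"unnecke--Ruzsa inequality controls all iterated sumsets $|kA - \ell A|$ by constants depending on $k,\ell,K$ times $|A|$; feeding this into the quantitative version of Freiman's theorem in arbitrary abelian groups, one contains $A$ inside a coset progression $H + P \subset G$, where $H$ is a finite subgroup of $G$ and $P = \{n_1 v_1 + \cdots + n_d v_d \colon |n_i| \leq L_i\}$ is a proper symmetric generalized arithmetic progression of some dimension $d = d(K)$, with $|H+P| \leq s(K)|A|$ for some explicit function $s(K)$.

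Given such a containment, I would construct the target group as $\wt{G} = H \oplus \bigoplus_{i=1}^d \Z/m_i\Z$, where each $m_i$ is taken just large enough (say $m_i = 4L_i + 1$) so that the natural quotient map $\pi$ sending $h + \sum_i n_i v_i \in H+P$ to $(h, n_1 \bmod m_1, \ldots, n_d \bmod m_d) \in \wt{G}$ preserves all $2$-sums among elements of $P$. Since $P$ is proper, $\pi$ is then a Freiman isomorphism on $P$, and \emph{a fortiori} on $A \subset H+P$; letting $\wt{A} = \pi(A)$, one has $|\wt{A}|/|\wt{G}| = |A|/(|H|\prod_i m_i) \geq |A|/(C^d |H+P|) \geq (C^d s(K))^{-1}$ for some absolute constant $C$, which is of the required form $c(K)$.

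The main obstacle --- and the sole source of the specific constant $(20K)^{-10K^2}$ --- is the structural step rather than the embedding step: one needs sharp enough dependence on $K$ in both $d(K)$ and $s(K)$. Obtaining this requires the full quantitative form of Freiman's theorem in arbitrary abelian groups, which packages several nontrivial ingredients including Chang's bounds on large spectra, Ruzsa covering arguments, and Freiman's theorem in $\Z$. Once such a structure theorem is in hand, the embedding step described above is routine, and the entire statement is imported as a black box from \cite[Proposition 1.2]{GR07}.
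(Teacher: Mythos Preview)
The paper does not give a proof of this proposition; it simply cites \cite[Proposition 1.2]{GR07}, exactly as you do in your final sentence. Your preceding sketch is a reasonable outline of how that result is proved.
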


%\begin{proof}
%By Freiman's theorem, there exists a coset progression $P+H$ of rank $r\ll_K 1$ and size $|P+H|\ll_K |A|$, such that $A$ is contained in a translate of $P+H$. Here $H$ is a finite subgroup of $G$ and $P$ is a progression of the form
%\[ P = \{v_0 + n_1v_1 + \cdots + n_rv_r \colon 0 \leq n_i < N_i\} \]
%for some $v_0,v_1,\cdots,v_r \in G$ and positive integers $N_1,\cdots,N_r$. Furthermore, we may assume that $P+H$ is $2$-\emph{proper} (for example, by applying~\cite[Corollary 1.18]{TV08}). Let $\wt{G} = (\Z/2N_1\Z) \times \cdots \times (\Z/2N_r\Z) \times H$, and consider the natural map $\phi \colon P+H \to \wt{G}$. This is a Freiman homomorphism thanks to $P+H$ being $2$-proper. The proof is completed by taking $\wt{A} = \phi(A)$.
%\end{proof}

In the case $G = \F_p^n$ with $p$ fixed, the following proposition~\cite[Lemma 5.6]{Sis18} says that one can take $c(K)$ to be polynomial in $K$. This is a slight generalization of \cite[Proposition 6.1]{GR07}, and can be proved using Ruzsa's embedding lemma~\cite{Ruz94} (see also~\cite[Lemma 5.26]{TV06}).

\begin{proposition}\label{prop:freiman-model-poly}
If $G = \F_p^n$  then Proposition~\ref{prop:freiman-model} holds with $c(K) = p^{-1}K^{-4}$ and $\wt{G} = \F_p^m$ for some $m$.
\end{proposition}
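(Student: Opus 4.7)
The plan is to produce $\wt{A}$ as the image of $A$ under a carefully chosen linear map $\pi \colon \F_p^n \to \F_p^m$, where $m$ is taken as small as possible subject to the requirement that $\pi$ is a Freiman isomorphism on $A$. The map $\pi$ will be found by a first moment argument, and the correct choice of $m$ comes from the Pl\"unnecke--Ruzsa inequality applied to $2A - 2A$.

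The first step is to reduce the Freiman isomorphism condition to a condition on the kernel. Since $\pi$ is linear, the equation $\pi(a_1) + \pi(a_2) = \pi(a_3) + \pi(a_4)$ is equivalent to $\pi(a_1 + a_2 - a_3 - a_4) = 0$, and similarly $a_1 + a_2 = a_3 + a_4$ is equivalent to $a_1 + a_2 - a_3 - a_4 = 0$. So $\pi$ is a Freiman isomorphism on $A$ if and only if
\[ \ker(\pi) \cap (2A - 2A) = \{0\}, \]
and since $A - A \subseteq 2A - 2A$ this automatically forces $\pi$ to be injective on $A$. From Pl\"unnecke--Ruzsa, $|A + A| \leq K|A|$ implies $|2A - 2A| \leq K^4 |A|$.

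Next I would let $m$ be the smallest nonnegative integer with $p^m \geq K^4 |A|$, so that $p^m \leq p K^4 |A|$. For a uniformly random $\F_p$-linear map $\pi \colon \F_p^n \to \F_p^m$, every fixed nonzero $x \in \F_p^n$ has $\pi(x) = 0$ with probability exactly $p^{-m}$, so a union bound gives
\[ \Pr\bigl[\ker(\pi) \cap (2A - 2A) \neq \{0\}\bigr] \leq \frac{|2A - 2A| - 1}{p^m} < 1. \]
Therefore a good $\pi$ exists. Setting $\wt{G} = \F_p^m$ and $\wt{A} = \pi(A)$ gives $|\wt{A}| = |A|$ and
\[ \frac{|\wt{A}|}{|\wt{G}|} = \frac{|A|}{p^m} \geq \frac{|A|}{p K^4 |A|} = \frac{1}{p K^4}, \]
as required.

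There is no substantive obstacle in this proof; the polynomial dependence on $K$ is built in from the polynomial ($K^4$) Pl\"unnecke bound on $|2A - 2A|$, and the probabilistic step is essentially trivial because every nonzero vector in $\F_p^n$ is mapped to a uniformly distributed vector in $\F_p^m$ by a random linear map. The one conceptual point worth stating carefully is the reduction of the Freiman isomorphism condition to the kernel condition above, after which the argument is just bookkeeping.
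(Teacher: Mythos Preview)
Your proof is correct and follows exactly the approach indicated in the paper, which does not give a proof but remarks that the result ``can be proved using Ruzsa's embedding lemma''; your argument is precisely that lemma specialized to $\F_p^n$ (random linear projection, Pl\"unnecke--Ruzsa bound on $|2A-2A|$, kernel condition for Freiman isomorphism).
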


\subsection{Arithmetic removal lemmas}

The key ingredient in proving Theorem~\ref{thm:99bsg} is the arithmetic removal lemma due to Green~\cite{Gre05} (see also~\cite{KSV09}).

\begin{theorem}\label{thm:arl}
Let $G$ be a finite abelian group, and let $A, B, C \subset G$ be three subsets. Let $\ee > 0$, and let $\delta > 0$ be sufficiently small in terms of $\ee$. If the number of solutions to $a+b=c$ with $a\in A, b \in B, c \in C$ is at most $\delta |G|^2$, then one can remove at most $\ee |G|$ elements from $A,B,C$ to obtain $A',B',C'$, respectively, such that there is no solution to $a+b=c$ with $a \in A',b\in B',c\in C'$.
\end{theorem}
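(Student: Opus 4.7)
The plan is to deduce Theorem~\ref{thm:arl} from an arithmetic regularity lemma, following Green's original strategy. First I would invoke a regularity lemma to partition $G$ into structured pieces on which $1_A$, $1_B$, $1_C$ are approximately constant up to a small Fourier error. In the $\F_p^n$ setting this takes the form of a subgroup $H\leq G$ of bounded index such that, on all but a small fraction of the cosets, each of $1_A-\alpha_x 1_{x+H}$, $1_B-\beta_x 1_{x+H}$, $1_C-\gamma_x 1_{x+H}$ has all nontrivial Fourier coefficients of magnitude at most $\eta_1$, where $\alpha_x,\beta_x,\gamma_x$ denote the relevant coset densities. For a general finite abelian $G$ one replaces $H$ by a regular Bohr set of bounded rank, and the partition becomes one into Bohr-set translates. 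The stronger forms of the regularity lemma allow $\eta_1$ to be prescribed as a function of $|G/H|$; I would take $\eta_1\ll \ee^3/|G/H|^2$.

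Next I would classify coset triples $(x_1+H,x_2+H,x_3+H)$ with $x_1+x_2\equiv x_3\pmod{H}$. Call such a triple \emph{heavy} if the coset densities of $A$, $B$, $C$ on the respective cosets are all at least $\eta_2:=\ee/3$. A standard generalised von Neumann computation, fed by the Fourier uniformity from the regularity step, shows that each heavy triple contributes at least $\tfrac{1}{2}\eta_2^3|H|^2$ solutions to $a+b=c$. Consequently, choosing $\delta$ small enough that $\delta|G|^2<\tfrac{1}{2}\eta_2^3|H|^2$, the hypothesis that the total solution count is at most $\delta|G|^2$ forces there to be no heavy triples at all.

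For every remaining triple at least one of $A$, $B$, $C$ has coset density below $\eta_2$; I would delete the offending elements from whichever set this is. This costs at most $\eta_2|H|$ per coset of $H$, and summing over cosets gives at most $3\eta_2|G|=\ee|G|$ deletions in total from $A\cup B\cup C$. Since heavy triples do not exist, every coset triple is light, and the deletion step has reduced at least one of its sides to density zero; hence no solutions to $a+b=c$ remain.

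The main obstacle is quantitative rather than conceptual. The arithmetic regularity lemma returns $|G/H|$ (or the Bohr-set rank in the general case) of tower-type in $\eta_1^{-1}$, while the counting step demands $\eta_1$ polynomially small in $\ee$ and in $|G/H|^{-1}$. This cascade forces $\delta(\ee)$ to be tower-type in $\ee^{-1}$, matching the bounds obtained in Green's paper and in the graph-theoretic approach of Kr\'al--Serra--Vena. Replacing this by polynomial dependence of $\delta$ on $\ee$ would be a major breakthrough, intertwined with the analogous open question for the arithmetic regularity lemma itself.
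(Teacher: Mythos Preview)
The paper does not supply its own proof of Theorem~\ref{thm:arl}; it quotes the result as the arithmetic removal lemma of Green~\cite{Gre05} (with the alternative graph-theoretic proof of Kr\'al--Serra--Vena~\cite{KSV09} also cited), and uses it as a black box. So there is no in-paper argument to compare your proposal against.

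That said, your sketch is a faithful outline of Green's original regularity-based proof and is conceptually sound. A couple of points you gloss over would need attention in a full write-up. First, the regularity lemma only guarantees Fourier uniformity on all but an exceptional set of cosets (or Bohr translates); those exceptional cosets must be handled separately, typically by deleting everything on them, and this deletion cost has to be absorbed into the $\ee|G|$ budget. Second, your deletion rule should be stated per coset and per set: for each coset $x+H$, remove $A\cap(x+H)$ entirely if its density is below $\eta_2$, and likewise for $B$ and $C$. With that phrasing the total removed from each of $A,B,C$ is at most $\eta_2|G|$, so taking $\eta_2=\ee$ (rather than $\ee/3$) already suffices if the statement is read as ``at most $\ee|G|$ from each''. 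Third, in a general finite abelian group the Bohr-set translates do not form an exact partition, so the counting and deletion steps carry additional error terms; this is routine but not entirely trivial. Your discussion of the quantitative cascade leading to tower-type $\delta(\ee)$ is accurate.
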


The best known bound in the removal lemma is of tower type; see~\cite{Fox11}. However, in the finite field model $G = \F_p^n$ with $p$ fixed, the removal lemma can be proved with polynomial bounds~\cite{FL17}, building on the recent breakthrough on cap-sets~\cite{CLP17,EG17}.

\begin{theorem}\label{thm:arl-Fp}
If $G = \F_p^n$ with $p$ fixed, then Theorem~\ref{thm:arl} holds with $\delta = (\ee/3)^{C_p}$, where $C_p$ is the constant given by $C_p = 1+c_p^{-1}$ and
\[ p^{1-c_p} = \inf_{0 < x < 1} x^{-(p-1)/3} (1+x+x^2 + \cdots + x^{p-1}). \]
\end{theorem}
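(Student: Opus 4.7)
The plan is to follow Fox--Lov\'asz's approach of converting the polynomial tri-colored sum-free bound into a polynomial arithmetic removal lemma. The key input is the Kleinberg--Sawin--Speyer strengthening of the Croot--Lev--Pach / Ellenberg--Gijswijt cap-set result \cite{CLP17, EG17}: in $\F_p^n$, any collection of triples $(x_i, y_i, z_i)_{i=1}^m$ with $x_i + y_i = z_i$ and such that $x_i + y_j = z_k$ forces $i = j = k$ satisfies $m \leq N^{1 - c_p}$, where $N = p^n$ and $c_p$ is as in the statement of Theorem~\ref{thm:arl-Fp}.

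Arguing by contrapositive, suppose that no removal of at most $\ee N$ elements from $A \cup B \cup C$ makes the configuration solution-free. View the Schur triples $(a,b,c) \in A \times B \times C$ with $a + b = c$ as hyperedges of a tripartite $3$-uniform hypergraph $H$; the hypothesis reads $\tau(H) > \ee N$. Since $\tau \leq 3\nu$ for any $3$-uniform hypergraph, a maximal matching in $H$ has size $m \geq \ee N / 3$ and yields vertex-disjoint triples $(a_i, b_i, c_i)_{i=1}^m$ satisfying $a_i + b_i = c_i$. By vertex-disjointness of the matching, any ``off-diagonal'' solution $a_i + b_j = c_k$ with $(i,j,k)$ not all equal must have $i, j, k$ all distinct, and each is a distinct Schur triple in $A \times B \times C$; hence there are at most $\delta N^2$ such off-diagonal configurations. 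Treating these as hyperedges on $\{1,\ldots,m\}$, a trivial greedy vertex cover of size $\leq \delta N^2$ leaves a tri-colored sum-free sub-collection of size at least
\[ m - \delta N^2 \geq \frac{\ee N}{3} - \delta N^2, \]
and Kleinberg--Sawin--Speyer bounds this by $N^{1 - c_p}$, yielding $\ee N/3 - \delta N^2 \leq N^{1-c_p}$.

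The polynomial exponent $C_p = 1 + c_p^{-1}$ emerges from balancing: demanding both $\delta N^2 \leq \ee N /6$ and $N^{1-c_p} \leq \ee N /6$ forces $N \leq \ee/(6\delta)$ and $N \geq (6/\ee)^{1/c_p}$, which coexist precisely when $\delta \lesssim \ee^{1 + 1/c_p}$, giving the desired shape $\delta = (\ee/3)^{C_p}$. Within this intermediate range of $N$ the resulting inequality $\ee N /3 \leq \delta N^2 + N^{1-c_p}$ is violated, yielding the contradiction. Outside this range the theorem can be handled more elementarily: if $N \leq (6/\ee)^{1/c_p}$ then $\delta N \leq \ee/6$, so removing one vertex per Schur triple (at most $\delta N^2 \leq \ee N /6$ removals) suffices; if $N$ is very large, one can iteratively delete the vertices of highest hypergraph degree to reduce the solution density and feed back into the main argument.

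The main obstacle I anticipate is the unified treatment of all $N$, since the clean matching-plus-cap-set dichotomy functions best in an intermediate window and must be supplemented by a greedy or iterative reduction for very large $N$. A secondary task is pinning down the precise base constant $\ee/3$ (as opposed to $\ee/6$), which amounts to a careful numerical optimization of each step and may benefit from a small tensor-power refinement; the essential quantitative mechanism, however, is furnished entirely by the polynomial tri-colored sum-free bound, and the exponent $C_p = 1 + c_p^{-1}$ emerges directly from the balance above.
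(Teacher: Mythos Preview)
The paper does not give its own proof of this theorem; it is quoted from Fox--Lov\'asz~\cite{FL17} and used as a black box (see the sentence immediately preceding the statement). So there is no in-paper argument to compare your attempt against.

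Your outline follows the Fox--Lov\'asz strategy in spirit---maximal matching of Schur triples, prune off-diagonal relations to reach a tri-coloured sum-free collection, then invoke the polynomial tri-coloured bound---and you correctly isolate the balance $\delta N \asymp N^{-c_p}$ that produces the exponent $C_p = 1 + c_p^{-1}$. But the implementation as written has a real gap, not merely a constant to tune. With $\delta = (\ee/3)^{C_p}$ the window $(6/\ee)^{1/c_p} \le N \le \ee/(6\delta)$ in which your dichotomy would yield a contradiction is \emph{empty}: the upper endpoint works out to $\tfrac12(3/\ee)^{1/c_p}$, strictly below the lower endpoint for every $c_p>0$. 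More generally, the crude step ``delete one index per off-diagonal relation'' discards up to $\delta N^2$ of the $m \ge \ee N/3$ matched triples, so it gives nothing once $N \gtrsim \ee/\delta$; neither the small-$N$ trivial removal nor your proposed high-degree iteration covers that regime (degree-reduction to threshold $T \asymp \delta N/\ee$ followed by the same deletion still loses a factor of $N$ in the same place). The published Fox--Lov\'asz proof avoids this loss by a more careful passage from ``large removal number'' to ``large tri-coloured sum-free set'' that works uniformly in $N$; consulting~\cite{FL17} directly is the quickest route to filling this in.
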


Theorem~\ref{thm:arl} is non-trivial only if $A,B,C$ are dense in $G$, but using the dense model theorem (Proposition~\ref{prop:freiman-model}), we can deduce a ``local'' version of the arithmetic removal lemma that applies to sets with small doubling.

\begin{corollary}\label{cor:arl-K}
Let $G$ be an abelian group, let $X \subset G$ be a finite subset with $|X+X| \leq K|X|$ for some $K \geq 1$, and let $A, B, C \subset X$ be three subsets. Let $\ee > 0$, and let $\delta > 0$ be sufficiently small in terms of $K,\ee$. If the number of solutions to $a+b=c$ with $a\in A, b \in B, c \in C$ is at most $\delta |X|^2$, then one can remove at most $\ee |X|$ elements from $A,B,C$ to obtain $A',B',C'$, respectively, such that there is no solution to $a+b=c$ with $a \in A',b\in B',c\in C'$.
\end{corollary}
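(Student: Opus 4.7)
The plan is to transport the problem, via the Freiman model theorem, into a dense setting inside a finite abelian group, where Theorem~\ref{thm:arl} applies directly. The only mild complication is that the Freiman isomorphism provided by Proposition~\ref{prop:freiman-model} preserves only two-term equations $a_1+a_2 = a_3+a_4$, whereas the relation $a+b=c$ that we need to preserve is of three-term type.

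I would handle this by adjoining a zero. Set $X' := X \cup \{0\}$, so that $X'+X' = (X+X) \cup X \cup \{0\}$ and $|X'+X'| \leq (K+2)|X'|$. Apply Proposition~\ref{prop:freiman-model} to $X'$ to obtain a finite abelian group $\wt{G}$ and a Freiman $2$-isomorphism $\pi$ on $X'$ with $|\pi(X')| \geq c(K+2)|\wt{G}|$. Composing with a translation in $\wt{G}$, I may assume that $\pi(0) = 0$. Writing $a+b=c$ as $a+b = c+0$ and invoking the two-term isomorphism property then yields the key equivalence: for $a,b,c \in X$, $a+b=c$ if and only if $\pi(a)+\pi(b)=\pi(c)$. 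In particular, the triples $(a,b,c) \in A\times B\times C$ with $a+b=c$ are in bijection with the triples in $\wt A \times \wt B \times \wt C$ solving the analogous equation in $\wt{G}$, where $\wt A := \pi(A)$, $\wt B := \pi(B)$, $\wt C := \pi(C)$.

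Since $|X| \leq |X'| \leq |\wt{G}|$, the hypothesis gives at most $\delta|X|^2 \leq \delta|\wt{G}|^2$ such triples inside $\wt{G}$. Taking $\delta$ sufficiently small in terms of $\ee' := \ee\, c(K+2)/2$ and applying Theorem~\ref{thm:arl} inside $\wt{G}$ produces subsets $\wt A', \wt B', \wt C'$, obtained by removing at most $\ee'|\wt{G}| \leq \ee|X|$ elements from $\wt A, \wt B, \wt C$ respectively, with no remaining solutions to $\wt a+\wt b=\wt c$. The pullbacks $A' := \pi^{-1}(\wt A')\cap A$, and similarly $B', C'$, then satisfy the conclusion of the corollary: any surviving triple $a+b=c$ with $a\in A', b\in B', c\in C'$ would push forward to a forbidden triple in $\wt A' \times \wt B' \times \wt C'$.

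There is no substantial obstacle here beyond recognizing the adjoin-a-zero trick; the rest of the argument is a routine quantitative propagation through the Freiman model theorem and Theorem~\ref{thm:arl}. Note that in the finite field setting $G = \F_p^n$, substituting Proposition~\ref{prop:freiman-model-poly} and Theorem~\ref{thm:arl-Fp} for their general counterparts yields a polynomial dependence of $\delta$ on $\ee/K$, which is exactly the quantitative input that will be used to obtain the polynomial clause of Theorem~\ref{thm:99bsg}.
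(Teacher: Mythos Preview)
Your proof is correct and follows essentially the same route as the paper: adjoin $0$ to $X$ (the paper phrases this as ``we may assume $0\in X$ at the cost of replacing $K$ by $K+1$''), apply the Freiman model theorem, translate so that the image of $0$ is $0$, rewrite $a+b=c$ as $a+b=c+0$ to make it a Freiman-isomorphism-invariant relation, and then invoke Theorem~\ref{thm:arl} with $\ee$ scaled by the density constant $c(K)$. The only differences are cosmetic (you use $K+2$ and $\ee\,c(K+2)/2$ where the paper uses $K+1$ and $\ee\,c(K)$).
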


\begin{proof}
We may assume that $0 \in X$ (at the cost of replacing $K$ by $K+1$). By Proposition~\ref{prop:freiman-model}, there exists a Freiman isomorphism $\phi \colon X \to \wt{X}$, where $\wt{X}$ is a subset of a finite abelian group $\wt{G}$ with $|\wt{X}| \geq c(K) |\wt{G}|$. By an appropriate translation we may assume that $\phi(0) = 0$. Note that
\[ a+b=c+0 \ \ \text{if and only if} \ \ \phi(a) + \phi(b) = \phi(c) + 0 \]
whenever $a,b,c \in X$. Thus the number of solutions to $\phi(a) + \phi(b) = \phi(c)$ with $a \in A,b \in B, c \in C$ is at most $\delta |\wt{X}|^2 \leq \delta |\wt{G}|^2$. By Theorem~\ref{thm:arl} applied to the three sets $\phi(A), \phi(B), \phi(C) \subset \wt{G}$ and with $\ee$ replaced by $\ee c(K)$, one can remove at most $\ee c(K) |\wt{G}| \leq \ee |\wt{X}|$ elements from $A, B, C$ to obtain $A',B',C'$, respectively, such that there is no solution to  $\phi(a) + \phi(b) = \phi(c)$ with $a \in A', b \in B', c \in C'$. This implies that there is also no solution to $a+b=c$ with $a \in A', b \in B', c \in C'$, as desired.
\end{proof}

\begin{corollary}\label{cor:arl-Fp-K}
If $G = \F_p^n$ then Corollary~\ref{cor:arl-K} holds with $\delta = (\ee/3pK^4)^{C_p}$, where $C_p$ is the constant in Theorem~\ref{thm:arl-Fp}.
\end{corollary}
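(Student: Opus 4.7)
The plan is to repeat the proof of Corollary~\ref{cor:arl-K} verbatim, substituting the polynomial-bound versions of the two ingredients. That is, where the earlier proof invokes Proposition~\ref{prop:freiman-model}, which only yields a model density of $c(K) = (20K)^{-10K^2}$, I would instead invoke Proposition~\ref{prop:freiman-model-poly}, which in the present setting $G = \F_p^n$ produces a Freiman-isomorphic copy $\wt X \subset \wt G = \F_p^m$ with $|\wt X| \geq p^{-1} K^{-4} |\wt G|$; and where the earlier proof invokes Theorem~\ref{thm:arl}, I would invoke the polynomial removal lemma of Theorem~\ref{thm:arl-Fp}.

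Concretely, first translate so that $0 \in X$ (at the cost of replacing $K$ by $K+1$, which affects only constants), and apply Proposition~\ref{prop:freiman-model-poly} to obtain a Freiman isomorphism $\phi\colon X \to \wt X$ with $\phi(0)=0$. The Freiman isomorphism property, combined with $0\in X$, gives $a+b=c \iff \phi(a)+\phi(b)=\phi(c)$ for $a,b,c \in X$, so the hypothesis that there are at most $\delta |X|^2$ solutions to $a+b=c$ with $a\in A,b\in B, c\in C$ transfers to at most $\delta |\wt X|^2 \leq \delta |\wt G|^2$ solutions to $\phi(a)+\phi(b)=\phi(c)$. Then I would apply Theorem~\ref{thm:arl-Fp} to $\phi(A),\phi(B),\phi(C) \subset \wt G$ with the parameter $\ee$ there replaced by $\ee' = \ee \cdot p^{-1} K^{-4}$, which requires exactly $\delta \leq (\ee'/3)^{C_p} = (\ee/(3pK^4))^{C_p}$, matching the claimed bound. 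The resulting removal of at most $\ee'|\wt G| \leq \ee |\wt X|$ elements from each image pulls back via $\phi$ to a removal of the same number of elements from $A,B,C$, and the Freiman isomorphism property guarantees that no solutions to $a+b=c$ remain among the survivors.

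There is no substantive obstacle to carrying this out: the argument is a mechanical substitution, and the only thing to watch is the bookkeeping in the conversion between the relative densities in $\wt G$ and $\wt X$, which is linear in $c(K) = p^{-1}K^{-4}$, so the polynomial dependence in $\ee$ is preserved and the $K$-dependence ends up as the advertised $K^{4C_p}$.
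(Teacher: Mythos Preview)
Your proposal is correct and is exactly the approach the paper takes: the paper's entire proof is the one-line instruction to rerun the argument of Corollary~\ref{cor:arl-K} with Proposition~\ref{prop:freiman-model-poly} and Theorem~\ref{thm:arl-Fp} in place of their qualitative counterparts, and you have simply written that out in full, including the bookkeeping that recovers the stated bound $\delta = (\ee/3pK^4)^{C_p}$.
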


\begin{proof}
Run the same argument as above, but using the polynomial bounds in Proposition~\ref{prop:freiman-model-poly} and Theorem~\ref{thm:arl-Fp}.
\end{proof}

It may be of interest to note that Corollary~\ref{cor:arl-K} can be proved in ``self-contained'' manner, i.e., using triangle removal in graphs but not Freiman's theorem as follows. Apply the standard Kral-Serra-Vena proof~\cite{KSV09} but instead of each part of the graph having vertex set $G$, let it have vertex set $X+X$. Each solution to $a+b=c$ turns into between $|X|$ and $K|X|$ triangles in the graph from which one can complete the proof as usual. However, this proof does not let one apply the recent advances on the arithmetic triangle removal lemma in the finite field setting to derive Corollary~\ref{cor:arl-Fp-K}.

\subsection{A weak version of Theorem~\ref{thm:99bsg}}

\begin{lemma}\label{lem:99BSG-weak}
Let $G$ be an abelian group, and let $A, B\subset G$ be two subsets with $|A| = |B| = N$. Let $\Gamma \subset A \times B$ be a subset with $|\Gamma| \geq (1-\delta)N^2$ for some $0 < \delta < 1/4$. If $|A+_{\Gamma}B| \leq KN$ for some $K \geq 1$, then there exist subsets $A' \subset A$ and $B' \subset B$ such that
\[ |A'| \geq (1- \delta^{1/2})N, \ \ |B'| \geq (1- \delta^{1/2})N,\ \ \hbox{and} \ \ |A'+B'| \leq K^3N / (1-2\delta^{1/2})^2. \]
\end{lemma}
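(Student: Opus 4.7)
The plan is to use the standard Balog--Szemer\'edi--Gowers path-counting argument in the form suited to this ``dense'' setting, where rather than using dyadic/popularity pigeonholing we can get away with a single averaging step, because almost every row and column of $\Gamma$ is already full.

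First, define
\[ A' = \{a \in A \colon |R_a| \geq (1-\delta^{1/2})N\}, \quad B' = \{b \in B \colon |C_b| \geq (1-\delta^{1/2})N\}, \]
where $R_a = \{b \in B \colon (a,b)\in \Gamma\}$ and $C_b = \{a \in A \colon (a,b) \in \Gamma\}$. A straightforward Markov-type estimate from $|\Gamma|\geq (1-\delta)N^2$ yields $|A\setminus A'|,|B\setminus B'| \leq \delta^{1/2} N$, so $|A'|,|B'|\geq (1-\delta^{1/2})N$ as required.

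Next, the core step is to show that for every $(a,b)\in A'\times B'$ there are many ``paths'' of length three in $\Gamma$ joining $a$ to $b$. Concretely, I want to count pairs $(a_1,b_1)\in A\times B$ such that $(a_1,b),(a_1,b_1),(a,b_1)$ all lie in $\Gamma$. Pick $a_1$ first from $C_b \cap A'$; since $|C_b|\geq (1-\delta^{1/2})N$ and $|A\setminus A'|\leq \delta^{1/2}N$, there are at least $(1-2\delta^{1/2})N$ choices. For each such $a_1 \in A'$, the intersection $R_a\cap R_{a_1}$ has size at least $2(1-\delta^{1/2})N - N = (1-2\delta^{1/2})N$, giving valid choices of $b_1$. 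So the number of such pairs $(a_1,b_1)$ is at least $(1-2\delta^{1/2})^2 N^2$.

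Finally, apply the identity
\[ a+b = (a+b_1) - (a_1 + b_1) + (a_1 + b), \]
in which each of the three terms on the right lies in the set $S := A+_\Gamma B$. Writing $r(s)$ for the number of triples $(x,y,z)\in S^3$ with $x-y+z=s$, the preceding step gives $r(a+b)\geq (1-2\delta^{1/2})^2 N^2$ for every $(a,b)\in A'\times B'$, hence $r(s)\geq (1-2\delta^{1/2})^2 N^2$ for every $s\in A'+B'$. Summing and using $\sum_s r(s) = |S|^3 \leq K^3 N^3$ yields
\[ |A'+B'| \cdot (1-2\delta^{1/2})^2 N^2 \leq K^3 N^3, \]
which rearranges to the desired bound. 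There is no real obstacle here; the one point that requires a moment's care is ensuring that the index $a_1$ chosen in the path lies in $A'$ (not merely in $A$), which is what forces the two ``$1-\delta^{1/2}$'' losses to combine into the factor $(1-2\delta^{1/2})^2$ rather than something worse.
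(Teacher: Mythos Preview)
Your proof is correct and follows essentially the same path-of-length-three argument as the paper's own proof: define $A',B'$ via a Markov cut on row/column densities, then for each $(a,b)\in A'\times B'$ count intermediate pairs giving representations $a+b=(a+b_1)-(a_1+b_1)+(a_1+b)$ with all three summands in $A+_\Gamma B$, and conclude by the pigeonhole bound $|A'+B'|\leq |A+_\Gamma B|^3/((1-2\delta^{1/2})^2N^2)$. The only cosmetic difference is that the paper chooses the intermediate $b$ first (in $B'$) and then $a$, whereas you choose $a_1$ first (in $A'$) and then $b_1$; the two are symmetric and yield identical constants.
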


\begin{proof}
When $A=B$ this is~\cite[Lemma 5.1]{GKM15}. For the general case, we consider ``paths of length $3$'' in addition to ``paths of length $2$'', as in~\cite[Section 6.4]{TV06}. Let $A'$ be the set of $a \in A$ such that $(a,b)\in \Gamma$ for at least $(1-\delta^{1/2})N$ elements $b \in B$, and similarly let $B'$ be the set of $b \in B$ such that $(a,b) \in \Gamma$ for at least $(1-\delta^{1/2})N$ elements $a \in A$. Thus
\[ |A'| \geq (1-\delta^{1/2})N\ \ \hbox{and} \ \ |B'| \geq (1-\delta^{1/2})N. \]
It follows that for any $a \in A'$, there are at least $(1-2\delta^{1/2})N$ elements $b \in B'$ such that $(a,b) \in \Gamma$. Thus for any pair $(a',b') \in A' \times B'$, there are at least $(1-2\delta^{1/2})N$ elements $b \in B'$ such that $(a',b) \in \Gamma$, and then for this choice of $b$ there are at least $(1-2\delta^{1/2})N$ values of $a$ such that both $(a,b)$ and $(a,b')$ lie in $\Gamma$. This leads to at least $(1-2\delta^{1/2})^2N^2$ representations of the form
\[ a' + b' = (a' + b) - (a + b) + (a + b') \]
with $(a',b),(a,b),(a,b') \in \Gamma$. It follows that
\[ |A'+B'| \leq \frac{|A+_{\Gamma}B|^3}{(1-2\delta^{1/2})^2N^2} \leq \frac{K^3N}{(1-2\delta^{1/2})^2}.  \]
\end{proof}

\section{Proof of Theorem~\ref{thm:99bsg} and Corollary~\ref{cor:doubling-3}}

The following proposition encapsulates the connection between Theorem~\ref{thm:99bsg} and the arithmetic removal lemma.

\begin{proposition}\label{prop:equiv}
Let $G$ be an abelian group, and let $A,B \subset G$ be two subsets with $|A| = |B| = N$. Let $\ee,\delta > 0$.  The following two statements are equivalent.
\begin{enumerate}
\item  If $\Gamma \subset A \times B$ is a subset with $|\Gamma| \geq (1-\delta)N^2$, then there exist subsets $A'\subset A$, $B' \subset B$ and $S \subset A+B$ with
\[ |A'| \geq (1-\ee)N, \ \ |B'| \geq (1-\ee)N, \ \ \hbox{and}\ \ |S| \leq \ee N, \]
such that $A' + B' \subset (A+_{\Gamma}B) \cup S$.
\item  Let $C \subset G$ be a third subset. If the number of solutions to $a+b=c$ with $a\in A, b \in B, c \in C$ is at most $\delta N^2$, then one can remove at most $\ee N$ elements from $A,B,C$ to obtain $A',B',C'$, respectively, such that there is no solution to $a+b=c$ with $a \in A',b\in B',c\in C'$.
\end{enumerate}
\end{proposition}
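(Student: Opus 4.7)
The plan is to prove each direction by setting up an essentially tautological correspondence between the restricted sumset $A+_{\Gamma}B$ (in statement (1)) and the ``bad'' target set $C$ (in statement (2)). In each direction the construction is forced, and the main task is only to verify that the sizes of the sets being discarded line up correctly.

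For the direction $(1) \Rightarrow (2)$, I would start with $C \subset G$ satisfying the hypothesis of (2) and define
\[ \Gamma = \{(a,b) \in A \times B : a+b \notin C\}. \]
By construction, the pairs $(a,b) \in A \times B \setminus \Gamma$ are exactly the representations of elements of $C$ as $a+b$, so there are at most $\delta N^2$ of them, and hence $|\Gamma| \geq (1-\delta)N^2$. Moreover $(A+_{\Gamma}B) \cap C = \emptyset$ by definition. Applying (1) yields $A', B', S$; I would then set $C' = C \setminus S$, noting $|C \setminus C'| \leq |S| \leq \ee N$. If $a \in A', b \in B', c \in C'$ satisfied $a+b=c$, then $a+b \in (A+_{\Gamma}B) \cup S$ by the conclusion of (1); the first option is excluded because $c \in C$ is disjoint from $A+_{\Gamma}B$, and the second is excluded because $c \in C'$ is disjoint from $S$, a contradiction.

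For the direction $(2) \Rightarrow (1)$, I would start with $\Gamma$ as in (1) and set
\[ C = (A+B) \setminus (A+_{\Gamma}B). \]
Any solution to $a+b=c$ with $a \in A, b \in B, c \in C$ forces $(a,b) \notin \Gamma$, so there are at most $|A \times B \setminus \Gamma| \leq \delta N^2$ such solutions. Apply (2) to obtain $A', B', C'$, and define $S = C \setminus C'$, so $|S| \leq \ee N$. For any $a \in A'$ and $b \in B'$, either $a+b \in A+_{\Gamma}B$ (and we are done) or $a+b \in C$; in the latter case, the no-solution conclusion of (2) forces $a+b \notin C'$, hence $a+b \in S$. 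This verifies $A' + B' \subset (A+_{\Gamma}B) \cup S$.

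No genuine obstacle arises: the proposition is essentially a reformulation, and both reductions preserve the parameters $\delta$ and $\ee$ exactly. The only point that requires care is checking that the elements removed from $C$ in the first direction, and from $A+B$ in the second direction, can be arranged to have size $\leq \ee N$ using the corresponding set ($S$ or $C \setminus C'$) from the other formulation.
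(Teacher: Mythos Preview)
Your proposal is correct and follows essentially the same argument as the paper: in each direction you use exactly the same construction (defining $\Gamma$ via $a+b\notin C$ for $(1)\Rightarrow(2)$, and $C=(A+B)\setminus(A+_{\Gamma}B)$ for $(2)\Rightarrow(1)$), and the verifications match. The only cosmetic omission is that you do not explicitly note $S=C\setminus C'\subset C\subset A+B$ in the second direction, but this is immediate.
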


\begin{proof}
To show that (2) implies (1), let $C = (A+B) \setminus (A+_{\Gamma}B)$. If $a+b=c$ for some $a\in A, b\in B, c \in C$, then $(a,b) \notin\Gamma$ by the definition of $C$, and thus  the number of solutions to $a+b=c$ with $a \in A, b\in B, c\in C$ is at most $\delta N^2$. Then (2) implies that one can remove at most $\ee N$ elements from $A,B,C$ to obtain $A',B',C'$, respectively, such that $(A'+B') \cap C'$ is empty. Thus
\[ |A'| \geq (1-\ee)N, \ \ |B'| \geq (1-\ee)N,\ \ \hbox{and} \ \ |C \setminus C'| \leq \ee N. \]
Since any element of $A'+B'$  lies in either $A+_{\Gamma}B$ or $C \setminus C'$, (1) follows by taking $S = C \setminus C'$.

To show that (1) implies (2), let $\Gamma = \{(a,b) \in A \times B \colon a+b \notin C\}$. Every pair $(a,b) \notin \Gamma$ leads to a solution to $a+b=c$ with $a \in A, b \in B, c \in C$, so the number of pairs not in $\Gamma$ is at most $\delta N^2$. Then (1) implies that there exist subsets $A' \subset A$, $B' \subset B$ and $S \subset A+B$ with
\[ |A'| \geq (1-\ee)N, \ \ |B'| \geq (1-\ee)N,\ \ \hbox{and} \ \ |S| \leq \ee N, \]
such that $A'+B' \subset (A+_{\Gamma}B) \cup S$. Since $A+_{\Gamma}B$ is disjoint from $C$ by the definition of $\Gamma$, we may take $C' = C \setminus S$ so that $A'+B'$ is disjoint from $C'$, as desired.
\end{proof}

\begin{proof}[Proof of Theorem~\ref{thm:99bsg}]
Suppose that $\delta < 1/100$ is small enough in terms of $K,\ee$. By Proposition~\ref{prop:equiv}, it suffices to prove the second statement of Proposition~\ref{prop:equiv} for any $C \subset G$.  By Lemma~\ref{lem:99BSG-weak}, we can find $A_0 \subset A$ and $B_0 \subset B$ such that
\[ |A_0| \geq (1-\delta^{1/2})N, \ \ |B_0| \geq (1-\delta^{1/2})N,\ \ \hbox{and} \ \ |A_0 + B_0| \leq 2K^3N. \]
By shrinking $A_0$ or $B_0$, we may assume that $|A_0| = |B_0|$. Let $C_0 = C \cap (A_0+B_0)$, and let $X = A_0 \cup B_0 \cup C_0$. Then $X+X$ is contained in the union of the iterated sumsets $nA_0 + mB_0$ with $n,m \in \{0,1,2\}$. Using the Ruzsa triangle inequality, one can deduce that
\[ |X+X| \ll K^{O(1)} N \ll K^{O(1)} |X|. \]
(See~\cite[Corollary 2.24]{TV06}). The number of solutions to $a+b=c$ with $a \in A_0, b \in B_0, c \in C_0$ is at most $\delta N^2 \leq 2\delta  |X|^2$, so Corollary~\ref{cor:arl-K} (applied with $\ee$ replaced by $\ee K^{-O(1)}/2$) implies that one can remove at most $\ee N/2$ elements from $A_0,B_0,C_0$ to obtain $A_0',B_0',C_0'$, respectively, such that there is no solution to $a+b=c$ with $a \in A_0', b \in B_0', c \in C_0'$. If we take $A' = A_0'$, $B' = B_0'$, and $C' = C_0' \cup (C \setminus C_0)$, then there are still no solutions to $a+b=c$ with $a \in A', b \in B', c \in C'$, and moreover
\[ |A \setminus A'| \leq \tfrac{1}{2} \ee N + \delta^{1/2}N \leq \ee N, \ \ |B \setminus B'| \leq \ee N, \ \ \hbox{and}\ \ |C\setminus C'| \leq \ee N, \]
as desired. The polynomial dependence of $\delta$ on $K,\ee$ in the case when $G = \F_p^n$ follows by using Corollary~\ref{cor:arl-Fp-K}.
\end{proof}

\begin{proof}[Proof of Corollary~\ref{cor:doubling-3}]
By Theorem~\ref{thm:99bsg} (applied with $\ee$ replaced by $\ee/10$), there exist subsets $A' \subset A$ and $B' \subset B$ such that
\[ |A'| = |B'| \geq \left(1-\tfrac{\ee}{10}\right)N \ \ \hbox{and}\ \ |A'+B'| \leq |A+_{\Gamma}B| + \tfrac{\ee}{10}N \leq \left(3-\tfrac{9\ee}{10}\right)N - 4. \]
Hence $|A'+B'| \leq 3|A'| - 4$. Apply (an asymmetric version of) Freiman's $3k-4$ theorem (see~\cite{LS95}) to conclude that there exist arithmetic progressions $P,Q$ with the same common differences and sizes at most
\[ |A'+B'| - |A'| + 1  \leq |A+_{\Gamma}B| - (1-\ee)N + 1, \]
such that $A' \subset P$ and $B' \subset Q$. This completes the proof.
\end{proof}

\section{A continuous version of Behrend's construction}

Our construction for proving Theorem~\ref{thm:non-poly} is motivated by Behrend's construction of a large $3$-AP-free set, and in particular motivated by the construction in~\cite{GW10}, which starts with a continuous version and then converts it into a discrete one via a probabilistic argument. We will also start with a continuous set, but to convert it into a discrete set we adopt a more rudimentary approach.

Let $d$ be a positive integer, and henceforth we will always assume that it is sufficiently large. Define
\[ S = \{\ve{x} \in \R^d \colon 1-\eta \leq \|\ve{x}\| \leq 1\},  \]
where $\|\cdot\|$ denotes the $L^2$-norm, $d$ is a large positive integer, and $\eta >0$ is small (say $\eta \leq d^{-10}$). Denote by $V_d$ the volume of the unit ball in $\R^d$. Then the volume of $S$ is
\[ \vol(S) = [1 - (1-\eta)^d] V_d = d\eta V_d (1+O(d\eta)).   \]
In particular $d\eta V_d \ll \vol(S) \ll d\eta V_d$. We will use the crude estimates
\[ d^{-d/2} \leq V_d \leq  10^d \cdot d^{-d/2}. \]

\begin{lemma}\label{lem:3ap-vol}
The volume of the set 
\[ T := \{(\ve{x},\ve{y}) \colon \ve{x},\ve{x}-\ve{y},\ve{x}+\ve{y} \in S\} \]
is $\ll (2\eta)^{d/2-1} \vol(S)^2$.
\end{lemma}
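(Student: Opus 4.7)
The plan is to exploit the parallelogram identity to pin down $\mathbf{y}$ to a very small ball, then freely bound the $\mathbf{x}$-fibre by $\vol(S)$. Concretely, suppose $(\mathbf{x},\mathbf{y}) \in T$. Then each of $\|\mathbf{x}\|^2, \|\mathbf{x}-\mathbf{y}\|^2, \|\mathbf{x}+\mathbf{y}\|^2$ lies in the interval $[(1-\eta)^2,1]$. The identity
\[ \|\mathbf{x}+\mathbf{y}\|^2 + \|\mathbf{x}-\mathbf{y}\|^2 = 2\|\mathbf{x}\|^2 + 2\|\mathbf{y}\|^2 \]
then forces $2\|\mathbf{y}\|^2 \leq 2 - 2(1-\eta)^2 \leq 4\eta$, so $\|\mathbf{y}\| \leq \sqrt{2\eta}$.

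Thus in the $\mathbf{y}$-variable, $(\mathbf{x},\mathbf{y})\in T$ confines $\mathbf{y}$ to a Euclidean ball of radius $\sqrt{2\eta}$, whose volume is exactly $V_d(2\eta)^{d/2}$. For each fixed such $\mathbf{y}$, the admissible $\mathbf{x}$ lie in $S \cap (S+\mathbf{y}) \cap (S-\mathbf{y})$, which is trivially contained in $S$ and so has volume at most $\vol(S)$. Applying Fubini gives
\[ \vol(T) \leq \vol(S) \cdot V_d (2\eta)^{d/2}. \]

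To reach the desired form, I would then use the lower bound $\vol(S) \gg d\eta V_d$ established just before the lemma to trade a factor of $V_d$ for $\vol(S)/(d\eta)$. Substituting yields
\[ \vol(T) \ll \vol(S) \cdot \frac{\vol(S)}{d\eta} \cdot (2\eta)^{d/2} = \frac{2}{d}\,(2\eta)^{d/2-1} \vol(S)^2, \]
which is comfortably $\ll (2\eta)^{d/2-1}\vol(S)^2$ (indeed with an extra $1/d$ to spare, which is where the largeness of $d$ enters). There is no real obstacle here: the only substantive observation is the use of the parallelogram law, after which the estimate is a one-line Fubini calculation combined with the volume bound for $S$ already recorded above the lemma.
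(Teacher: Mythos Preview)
Your argument is correct and is essentially identical to the paper's proof: both apply the parallelogram identity to confine $\mathbf{y}$ to the ball of radius $\sqrt{2\eta}$, bound the $\mathbf{x}$-fibre trivially by $\vol(S)$, and then trade $V_d$ for $\vol(S)/\eta$ using the estimate $\vol(S)\gg d\eta V_d$. The paper just compresses the final substitution into a single $\ll$, whereas you track the extra $2/d$ factor explicitly.
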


\begin{proof}
If $\ve{x}, \ve{x}-\ve{y}, \ve{x}+\ve{y}$ is a $3$-term progression in $S$, then from the identity
\[ 2\|\ve{x}\|^2 + 2\|\ve{y}\|^2 = \|\ve{x}+\ve{y}\|^2 + \|\ve{x}-\ve{y}\|^2 \]
one can deduce that
\[ \|\ve{y}\|^2 \leq 1-(1-\eta)^2 \leq 2\eta. \]
It follows that  the volume of $T$ is
\[ \ll (2\eta)^{d/2} V_d \cdot \vol(S) \ll (2\eta)^{d/2-1} \vol(S)^2. \]
\end{proof}

Clearly $S+S$ is the ball with radius $2$ centered at the origin, so that $\vol(S+S) = 2^d V_d$.

\begin{proposition}\label{prop:s'+s'}
If $S' \subset S$ is a measurable subset with $\vol(S') \geq (1-\ee)\vol(S)$ for some $\ee \in (0,\eta^{3/2})$, then 
\[ \vol(S'+S') \geq \vol(S+S) - O\left(\frac{20^d}{\eta}\left(\frac{\ee}{\eta} + \ee^{2/3}\right) \vol(S)\right). \]  
In particular, if $\ee = 25^{-d}\eta^3$ then 
\[ \vol(S'+S') \geq \vol(S+S) - \tfrac{1}{100}\vol(S). \]
\end{proposition}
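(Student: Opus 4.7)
The plan is to introduce the representation function $r(\ve{z}) := \vol(S \cap (\ve{z} - S)) = (\mathbf{1}_S * \mathbf{1}_S)(\ve{z})$, which records the measure of ways to write $\ve{z} = \ve{x}+\ve{y}$ with $\ve{x},\ve{y} \in S$. A pigeonhole argument reduces the proposition to controlling a sub-level set of $r$. Namely, setting $E := S \setminus S'$ (so $\vol(E) \leq \ee\,\vol(S)$), if $\ve{z} \notin S'+S'$ then every $\ve{x} \in S \cap (\ve{z}-S)$ satisfies $\ve{x} \in E$ or $\ve{z}-\ve{x} \in E$. This forces
\[ r(\ve{z}) \leq \vol(E) + \vol(\ve{z}-E) = 2\vol(E) \leq 2\ee\,\vol(S), \]
and hence
\[ (S+S) \setminus (S'+S') \;\subset\; \bigl\{\ve{z} \in B_2 : r(\ve{z}) \leq 2\ee\,\vol(S)\bigr\}. \]

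Next, I would compute $r(\ve{z})$ explicitly. By rotational symmetry it depends only on $t = \|\ve{z}\|$. Writing $\ve{x} = s\,\ve{z}/t + \ve{w}$ with $\ve{w} \perp \ve{z}$, the constraints $\|\ve{x}\|,\|\ve{z}-\ve{x}\| \in [1-\eta,1]$ force $s$ into an interval of length $\min(2,\,2\eta/t)$ centered at $t/2$, and $\|\ve{w}\|$ into a spherical shell of thickness $\sim \eta/\rho(t)$ around radius $\rho(t) := \sqrt{1-t^2/4}$. Combining the $s$-integration with the $(d-1)$-dimensional shell volume gives a lower bound of the form
\[ r(\ve{z}) \;\gtrsim\; \frac{\eta^{2}}{t}\,(d-1)V_{d-1}\,\rho(t)^{d-3}, \]
valid provided $\rho(t)^2 \gtrsim \eta$, i.e.\ $t \leq 2 - C\eta$.

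I would then split $B_2$ into a bulk $\{\rho(t) \geq \rho^*\}$ and a boundary layer $\{\rho(t) < \rho^*\}$, for a parameter $\rho^*$ to be chosen. Using the lower bound on $r(\ve{z})$, one picks $\rho^*$ so that $r(\ve{z}) > 2\ee\,\vol(S)$ on the bulk; by the pigeonhole step, the bulk therefore lies inside $S'+S'$. The boundary layer is a thin spherical shell near $\|\ve{z}\| = 2$, whose volume is bounded by $\vol(B_2) - \vol(B_{2\sqrt{1-(\rho^*)^2}}) \ll 2^d V_d \cdot d(\rho^*)^2$. Balancing the two contributions (the bulk where $r$ falls below $2\ee\,\vol(S)$ versus the thin boundary shell) and using the crude bounds $V_d \leq 10^d d^{-d/2}$ and $V_{d-1}/V_d \ll \sqrt{d}$ yields the claimed estimate $O((20^d/\eta)(\ee/\eta + \ee^{2/3})\vol(S))$: the $\ee/\eta$ contribution comes from the bulk (where the typical ratio $r(\ve{z})/\vol(S)$ scales with $\eta$), and the $\ee^{2/3}$ contribution arises as the value of $(\rho^*)^2$ obtained by optimizing the split. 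The ``in particular'' assertion then follows by plugging $\ee = 25^{-d}\eta^{3}$ into the main bound.

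The main obstacle I expect is the precise computation of $r(\ve{z})$ near $t = 2$, where the shell parametrization degenerates and care is needed to track effective thicknesses so that the boundary-layer contribution genuinely has the claimed form. The second technical point is keeping careful control of the dimensional constants (factors of $V_{d-1}/V_d$, $2^d$, etc.) through the balancing step, since these are what produce the $20^d$ prefactor.
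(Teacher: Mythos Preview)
Your pointwise step---that $\ve z\notin S'+S'$ forces $r(\ve z)\le 2\ee\,\vol(S)$---is correct, but the sub-level-set argument built on it cannot deliver the stated bound. From your own estimate $r(\ve z)\gg(\eta^{2}/t)\,V_{d-1}\,\rho(t)^{d-3}$ together with $\vol(S)\ll d\eta V_d$, the requirement $r(\ve z)>2\ee\,\vol(S)$ becomes $\rho^{d-3}\gg \ee/\eta$ up to factors polynomial in $d$. Hence the threshold is $\rho^{*}$ of order $(\ee/\eta)^{1/(d-3)}$, so $(\rho^{*})^{2}$ is of order $(\ee/\eta)^{2/(d-3)}$, not $\ee^{2/3}$; the boundary-layer contribution you obtain is then of order $(2^{d}/\eta)(\ee/\eta)^{2/(d-3)}\vol(S)$, whose exponent on $\ee$ collapses to $0$ as $d\to\infty$. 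The paper's Remark after Lemma~\ref{lem:ry-bound} makes precisely this point: the popular-sums argument gives only $O_{d}(\ee^{2/(d+1)})$, which is too weak for the application. Your claimed ``balancing'' to $\ee^{2/3}$ is not supported by the formula you wrote down.

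The missing idea is that the fibres $R_{\ve y}=S\cap(\ve y-S)$ are \emph{nearly disjoint}: if $R_{\ve y_{1}}\cap R_{\ve y_{2}}\ne\emptyset$ then $\|\ve y_{1}-\ve y_{2}\|<2(t_{1}^{1/2}+t_{2}^{1/2})$ where $t_{i}=2-\|\ve y_{i}\|$ (Lemma~\ref{lem:close-y-intersect}). On each dyadic shell $D_{t}=\{\ve y\in(S+S)\setminus(S'+S'):2-t\le\|\ve y\|\le 2-t/2\}$ one extracts a maximal family $\ve y_{1},\dots,\ve y_{m}\in D_{t}$ with the $R_{\ve y_{i}}$ pairwise disjoint, so $D_{t}$ is covered by $m$ balls of radius $4t^{1/2}$. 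Since $R_{\ve y_{i}}$ is invariant under $\ve x\mapsto\ve y_{i}-\ve x$ and $\ve y_{i}\notin S'+S'$, one has $\vol(R_{\ve y_{i}}\cap(S\setminus S'))\ge\tfrac12\vol(R_{\ve y_{i}})$; summing over the disjoint $R_{\ve y_{i}}$ and using Lemma~\ref{lem:ry-bound} yields $m\ll 5^{d}\ee\,t^{-(d-1)/2}/\min(t,\eta)$. The factor $t^{-(d-1)/2}$ cancels almost completely against the $t^{d/2}$ from the covering-ball volumes, leaving $\vol(D_{t})\ll 20^{d}V_{d}\,\ee\,t^{1/2}/\min(t,\eta)$, with a $d$-independent power of $t$. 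Combining this with the trivial bound $\vol(D_{t})\ll d2^{d}V_{d}t$ (they cross at $t=\ee^{2/3}$, which is where the $\ee^{2/3}$ genuinely originates) and summing dyadically gives the proposition.
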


\begin{remark}
It is natural to conjecture that $\vol(S'+S')$ is smallest when $S'$ is the set $\{\ve{x} \in \R^d \colon 1-\eta \leq \|\ve{x}\| \leq 1-\ee\eta\}$, in which case $S'+S'$ is the ball with radius $2 - 2\ee\eta$ and so
\[ \vol(S') \geq (1-O(\ee))\vol(S) \ \  \hbox{and}\ \ \vol(S'+S') \geq \vol(S+S) - O(2^d \ee \vol(S)). \]
Our result is weaker than this, but turns out to be sufficient for our purposes.
\end{remark}

To prove Proposition~\ref{prop:s'+s'}, first we need some estimates on the volume of the sets $R_{\ve{y}} := S \cap (\ve{y} - S)$.

\begin{lemma}\label{lem:ry-bound}
If $\|\ve{y}\| = 2-t$ for some $t \in (0,2)$, then 
\[ \vol(R_{\ve{y}}) \gg V_{d-1}2^{-d} \cdot \eta t^{(d-1)/2} \min(t,\eta). \]
\end{lemma}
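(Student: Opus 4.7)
The plan is to place coordinates adapted to the ``lens'' geometry of $R_{\ve{y}}$ near the midpoint of $\ve{0}$ and $\ve{y}$, and exhibit an explicit slab whose volume is easy to estimate. By rotational invariance we may assume $\ve{y} = (2-t)\ve{e}_1$, and parameterize $\ve{x} = \ve{y}/2 + (u,\ve{z}')$ with $u \in \R$ and $\ve{z}' \in \R^{d-1}$. Set $s = \|\ve{z}'\|$ and $T := t - t^2/4$. Expanding the squares in $\|\ve{x}\|^2, \|\ve{y}-\ve{x}\|^2 \in [(1-\eta)^2, 1]$ translates the conditions $\ve{x}, \ve{y}-\ve{x} \in S$ into
\[
 \max\bigl(0,\, T - 2\eta + |u|(2-t) - u^2 + \eta^2\bigr) \;\le\; s^2 \;\le\; T - |u|(2-t) - u^2.
\]

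My strategy is to exhibit a slab $\{(u,\ve{z}') \colon |u| \le U,\, s^2 \in [a,b]\}$ inside $R_{\ve{y}}$ and lower-bound its Lebesgue measure by $2U \cdot V_{d-1}\bigl(b^{(d-1)/2} - a^{(d-1)/2}\bigr)$. There are two regimes dictated by the $\min(t,\eta)$ factor in the claim. \emph{If $t \le \eta$}, the lower $s^2$-constraint is automatic for $|u|$ small, so take $U = t/8$, $a = 0$, and $b = T/2 \asymp t$; the $(d-1)$-ball of radius $\sqrt{b}$ has measure $\asymp V_{d-1} t^{(d-1)/2}$, yielding total volume $\gg V_{d-1} 2^{-d/2} t^{(d+1)/2}$, which dominates the target $V_{d-1} 2^{-d}\eta t^{(d+1)/2}$ since $\eta \le 1$. \emph{If $t > \eta$}, take $U := \eta/(4(2-t)) \ge \eta/8$ (so that $|u|(2-t) \le \eta/4$), $a := \max(0,\, T - 3\eta/2)$, and $b := T - \eta/2$; a direct check shows both constraints hold throughout this slab. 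When $T \ge 3\eta$ we have $a \asymp t$, and the mean value theorem applied to $x \mapsto x^{(d-1)/2}$ gives $b^{(d-1)/2} - a^{(d-1)/2} \gg (d-1)\eta \cdot t^{(d-3)/2} 2^{-d/2}$; when $\eta < T < 3\eta$ instead $b \asymp t$, and a direct estimate (using $b^{(d-1)/2} - a^{(d-1)/2} \gg b^{(d-1)/2}$ when $a \le b/2$, or the MVT otherwise) gives $\gg t^{(d-1)/2} 2^{-d/2}$. Multiplying by $2U \asymp \eta$ in either subcase exceeds the target $V_{d-1} 2^{-d}\eta^2 t^{(d-1)/2}$ by a wide margin.

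The main technical step is the seamless transition between the subcases in the regime $t > \eta$: the cross-sectional annulus $\{\ve{z}' \colon s^2 \in [a,b]\}$ degenerates from a thin shell (when $T \gg \eta$) into a full $(d-1)$-ball (when $T \le 2\eta$), and the radial difference $b^{(d-1)/2} - a^{(d-1)/2}$ must be estimated uniformly across this boundary. The choice $a = \max(0,\, T - 3\eta/2)$ is designed to remain valid and nontrivial through the transition, and the crude factor $V_{d-1} 2^{-d}$ on the right-hand side of the lemma absorbs all implicit dimensional constants that appear, so no delicate matching of estimates is needed.
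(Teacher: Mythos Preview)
Your approach is essentially the paper's: both place coordinates so that $\ve{y}=(2-t)\ve{e}_1$, restrict the first coordinate to a short interval near $1-t/2$, and estimate the $(d-1)$-dimensional cross-sections. The paper's slab is $\{x_1\in[1-t/2,\,1-t/2+\eta/8],\ \|\ve{x}\|\in[1-\eta/2,1]\}$, whereas you constrain $(u,s)=(x_1-(1-t/2),\|\ve{z}'\|)$ directly; the resulting integrals and the $t\lessgtr\eta$ case split match almost line for line.

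There is one small gap in your containment check in the regime $t>\eta$. For $s^2=b=T-\eta/2$ to satisfy the upper constraint $s^2\le T-|u|(2-t)-u^2$ for all $|u|\le U$, you need $|u|(2-t)+u^2\le\eta/2$. Your choice $U=\eta/(4(2-t))$ guarantees $|u|(2-t)\le\eta/4$, but it gives only $u^2\le\eta^2/(16(2-t)^2)$, which exceeds $\eta/4$ once $t>2-\tfrac12\sqrt{\eta}$; for such $t$ your slab is \emph{not} contained in $R_{\ve{y}}$, so the ``direct check'' fails. The fix is painless: since you only ever use $2U\asymp\eta$ in the volume bound, just take $U=\eta/8$ throughout the case $t>\eta$; then $|u|(2-t)\le\eta/4$ and $u^2\le\eta^2/64\le\eta/4$, and the rest of your argument (including the $T\gtrless 3\eta$ subcases) goes through unchanged. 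With this correction the proof is complete and equivalent to the paper's.
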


\begin{proof}
By symmetry, we may assume that $\ve{y} = (2-t,0,\cdots,0)$. First we show that $R_{\ve{y}}$ contains the set
\[ R_{\ve{y}}^- := \left\{\ve{x} = (x_1,\cdots,x_d) \in S \colon 1 - \tfrac{t}{2} \leq x_1  \leq 1 - \tfrac{t}{2} + \tfrac{\eta}{8}, \ \ 1 - \tfrac{\eta}{2} \leq \|\ve{x}\| \leq 1  \right\}. \]
Clearly $R_{\ve{y}}^-$ is contained in $S$. To see that $R_{\ve{y}}^-$ is also contained in $\ve{y} - S$, take any $\ve{x} \in R_{\ve{y}}^-$ and use  the identity 
\[ \|\ve{y}-\ve{x}\|^2   = \|\ve{x}\|^2 + \|\ve{y}\|^2 - 2\ve{x}\cdot\ve{y}. \]
Since 
\[ 1-\tfrac{\eta}{2} \leq \|\ve{x}\| \leq 1, \ \ \|\ve{y}\| = 2-t,\ \ \hbox{and} \ \ (2-t)^2 \leq 2\ve{x} \cdot \ve{y} \leq (2-t)\left(2-t+\tfrac{\eta}{4}\right), \]
a little bit of algebra reveals that $1-\eta \leq \|\ve{y}-\ve{x}\| \leq 1$, as desired.

Thus it suffices to estimate the volume of $R_{\ve{y}}^-$, which can be done by an integral. For any $x_1 \in (0,1)$, denote by $I(x_1)$ the volume of 
\[ \{\ve{x}' \in \R^{d-1} \colon \ve{x} = (x_1, \ve{x}') \in R_{\ve{y}}^-\}. \]
If $x_1 \leq 1-\eta/2$, then
\[ 
\begin{split}
V_{d-1}^{-1} I(x_1) &= (1-x_1^2)^{(d-1)/2} - \left[\left(1-\tfrac{\eta}{2}\right)^2-x_1^2\right]^{(d-1)/2}  \\
&= (1-x_1^2)^{(d-1)/2} \left[1 - \left(1 - \frac{\eta - \tfrac{\eta^2}{4}}{1-x_1^2}\right)^{(d-1)/2}\right]. \\
\end{split}
\]
Since $1-(1-x)^{(d-1)/2} \geq x$ for any $d \geq 3$ and $x \in [0,1]$, we have
\[ V_{d-1}^{-1} I(x_1) \gg \eta (1-x_1^2)^{(d-3)/2} \gg \eta(1-x_1)^{(d-3)/2} \]
for $x_1 \leq 1 - \eta/2$. On the other hand, if $x_1 \geq 1-\eta/2$, then
\[ V_{d-1}^{-1} I(x_1) = (1-x_1^2)^{(d-1)/2} \gg (1-x_1)^{(d-1)/2}. \]
So overall we always have
\[ I(x_1) \gg \eta (1-x_1)^{(d-1)/2} V_{d-1} \]
for all $x_1 \in (0,1)$. Hence,
\[ 
\begin{split}
\vol(R_{\ve{y}}^-) & = \int_{1-t/2}^{\min(1-t/2+\eta/8, 1)} I(x_1) \d x_1  \\
&\gg \eta V_{d-1} \int_{1-t/2}^{\min(1-t/2+\eta/8, 1)} (1-x_1)^{(d-1)/2} \d x_1 \\
&= \eta V_{d-1} \int_{\max(t/2-\eta/8,0)}^{t/2} x_1^{(d-1)/2} \d x_1 \\
&\gg \eta V_{d-1} 2^{-d} \left(t^{(d+1)/2} - \max\left( (t-\eta/4)^{(d+1)/2}, 0\right) \right). \\
\end{split}
\]
If $t \geq \eta/4$, then
\[ t^{(d+1)/2} - (t-\eta/4)^{(d+1)/2} = t^{(d+1)/2} \left[1 - \left(1 - \tfrac{\eta}{4t}\right)^{(d+1)/2}\right] \gg \eta t^{(d-1)/2}, \]
so that
\[ \vol(R_{\ve{y}}^-) \gg V_{d-1}2^{-d} \cdot \eta^2 t^{(d-1)/2}. \]
If $t \leq \eta/4$, then
\[ \vol(R_{\ve{y}}^-) \gg V_{d-1}2^{-d} \cdot \eta t^{(d+1)/2}. \]
The conclusion follows immediately.
\end{proof}

\begin{remark}
Using Lemma~\ref{lem:ry-bound} one can already get a weaker version of Proposition~\ref{prop:s'+s'}: If $t \geq C\ee^{2/(d+1)}$ for some $C$ large enough in terms of $d$ and (say) $\ee = \eta^3$, then $\ve{y}$ is a popular sum in the sense that $\vol(R_{\ve{y}}) > \ee \vol(S)$, and thus $\ve{y}$ must lie in $S'+S'$. It follows that
\[ \vol(S'+S') \geq \vol(S+S) - O_d\left( \ee^{2/(d+1)} \right). \]
Unfortunately this will not be enough for our purposes because of the exponent $2/(d+1)$ decaying like $1/d$ as $d$ grows. To do better we will make use of the fact that most of the $R_{\ve{y}}$'s are disjoint from each other.
\end{remark}

\begin{lemma}\label{lem:close-y-intersect}
Suppose that $\|\ve{y}_1\| = 2-t_1$ and $\|\ve{y}_2\| = 2-t_2$ for some $t_1,t_2 \in (0,2)$. If $R_{\ve{y}_1} \cap R_{\ve{y}_2}$  is nonempty, then  $\|\ve{y}_1 - \ve{y}_2\| < 2(t_1^{1/2} + t_2^{1/2})$.
\end{lemma}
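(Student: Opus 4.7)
The plan is to pick any $\ve{x} \in R_{\ve{y}_1} \cap R_{\ve{y}_2}$ and exploit the fact that $\ve{x}$, $\ve{y}_1 - \ve{x}$, and $\ve{y}_2 - \ve{x}$ all lie in $S$, so each has norm at most $1$. The key observation is that for $i = 1, 2$, the vector $\ve{y}_i$ is a sum of two vectors of norm at most $1$, namely $\ve{x}$ and $\ve{y}_i - \ve{x}$, yet $\|\ve{y}_i\| = 2 - t_i$ is close to the extremal value $2$. This forces $\ve{x}$ and $\ve{y}_i - \ve{x}$ to be close to each other, quantitatively.

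To make this precise, I would apply the parallelogram identity
\[ \|\ve{a}+\ve{b}\|^2 + \|\ve{a}-\ve{b}\|^2 = 2\|\ve{a}\|^2 + 2\|\ve{b}\|^2 \]
with $\ve{a} = \ve{x}$ and $\ve{b} = \ve{y}_i - \ve{x}$. Using $\|\ve{a}\|, \|\ve{b}\| \leq 1$ and $\|\ve{a}+\ve{b}\| = 2 - t_i$, this yields
\[ \|2\ve{x} - \ve{y}_i\|^2 = \|\ve{a}-\ve{b}\|^2 \leq 4 - (2-t_i)^2 = 4t_i - t_i^2 < 4t_i, \]
so $\|2\ve{x} - \ve{y}_i\| < 2 t_i^{1/2}$ for $i = 1, 2$.

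Finally, the triangle inequality gives
\[ \|\ve{y}_1 - \ve{y}_2\| = \|(2\ve{x} - \ve{y}_2) - (2\ve{x} - \ve{y}_1)\| \leq \|2\ve{x}-\ve{y}_1\| + \|2\ve{x}-\ve{y}_2\| < 2 (t_1^{1/2} + t_2^{1/2}), \]
which is exactly the desired bound. There is no real obstacle here; the proof is a direct application of the parallelogram identity followed by the triangle inequality, and the strictness of the inequality comes from the $-t_i^2$ term that the parallelogram identity produces.
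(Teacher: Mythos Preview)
Your proof is correct and essentially identical to the paper's: both pick $\ve{x}\in R_{\ve{y}_1}\cap R_{\ve{y}_2}$, apply the parallelogram identity to $\ve{x}$ and $\ve{y}_i-\ve{x}$ to obtain $\|2\ve{x}-\ve{y}_i\|<2t_i^{1/2}$, and finish with the triangle inequality. The only cosmetic difference is that the paper records the full range $\|\ve{x}\|,\|\ve{y}_i-\ve{x}\|\in[1-\eta,1]$ before using just the upper bound, whereas you go straight to $\leq 1$.
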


\begin{proof}
Let $\ve{x} \in R_{\ve{y}_1} \cap R_{\ve{y}_2}$. Then all of $\|\ve{x}\|, \|\ve{y}_1-\ve{x}\|, \|\ve{y}_2-\ve{x}\|$ lie in $[1-\eta, 1]$. From the identity
\[ \|2\ve{x} - \ve{y}_1\|^2 = 2\|\ve{x}\|^2 + 2\|\ve{y}_1-\ve{x}\|^2 - \|\ve{y}_1\|^2, \]
it follows that
\[ \|2\ve{x} - \ve{y}_1\| < 2t_1^{1/2}. \]
Similarly we have
\[ \|2\ve{x} - \ve{y}_2\| < 2t_2^{1/2}, \]
and the conclusion follows by the triangle inequality.
\end{proof}

\begin{proof}[Proof of Proposition~\ref{prop:s'+s'}]
For $t \in (0,2)$, let
\[ D_t := \{\ve{y} \in (S+S) \setminus (S'+S') \colon 2-t \leq \|\ve{y}\| \leq 2-t/2\}. \]
We now obtain an upper bound for $\vol(D_t)$. Pick a maximal set of elements $\ve{y}_1,\cdots,\ve{y}_m \in D_t$ with $R_{\ve{y}_1}, \cdots, R_{\ve{y}_m}$ mutually disjoint. Then for any $\ve{y} \in D_t$, the set $R_{\ve{y}}$ must intersect with some $R_{\ve{y}_i}$, and thus by Lemma~\ref{lem:close-y-intersect}, $\|\ve{y} - \ve{y}_i\| \leq 4t^{1/2}$. It follows that
\[ D_t \subset \bigcup_{1 \leq i \leq m} B(\ve{y}_i, 4t^{1/2}), \]
where $B(\ve{y}_i, 4t^{1/2})$ denotes the ball with radius $4t^{1/2}$ centered at $\ve{y}_i$. By considering the volumes of these balls, we see that $\vol(D_t) \leq m (16t)^{d/2} V_d$. Thus it suffices to bound $m$. Since $\ve{y}_i \notin S'+S'$, each $\ve{x} \in R_{\ve{y}_i}$ must satisfy either $\ve{x} \notin S'$ or $\ve{y}_i - \ve{x} \notin S'$. Hence either $R_{\ve{y}_i} \cap (S \setminus S')$ or $(\ve{y}_i - R_{\ve{y}_i}) \cap (S\setminus S')$ has volume at least $\vol(R_{\ve{y}_i})/2$. But $y_i - R_{\ve{y}_i} = R_{\ve{y}_i}$, so
\[ \vol(R_{\ve{y}_i} \cap (S \setminus S')) \geq \tfrac{1}{2} \vol(R_{\ve{y}_i}). \]
Summing over all $i$, and using the disjointness of $R_{\ve{y}_i}$ and Lemma~\ref{lem:ry-bound}, we get
\[ \ee\cdot d\eta V_d \gg \vol(S\setminus S') \geq \sum_{i=1}^m \vol(R_{\ve{y}_i} \cap (S \setminus S')) \gg m V_{d-1}4^{-d} \cdot \eta t^{(d-1)/2}\min(t,\eta),\]
which leads to the inequalities
\[ m \ll 4^d d\frac{V_d}{V_{d-1}} \cdot \frac{\ee}{t^{(d-1)/2} \min(t,\eta)} \ll 5^d \frac{\ee}{t^{(d-1)/2} \min(t,\eta)}, \]
since $V_d \ll V_{d-1}$. Hence
\[ \vol(D_t) \ll 20^d V_d \frac{\ee t^{1/2}}{\min(t,\eta)}.  \]
We also have the trivial bound
\[ 
\begin{split}
\vol(D_t) &\leq \vol(\{\ve{y} \in \R^d \colon 2-t \leq \|\ve{y}\| \leq 2-t/2\}) \\
& = [(2-t/2)^d - (2-t)^d] V_d \ll d2^dV_d \cdot t, \\
\end{split}
\]
which is better when $t \leq \ee^{2/3}$. Combining these bounds we have
\[ \vol(D_t) \ll 20^d V_d \cdot \begin{cases} \ee t^{1/2}\eta^{-1} & \eta \leq t < 2, \\ \ee t^{-1/2} & \ee^{2/3} \leq t \leq \eta, \\ t & 0 < t \leq \ee^{2/3}. \end{cases} \]
By summing over $t$ dyadically we find that
\[ \vol((S+S)\setminus (S'+S')) \ll 20^d V_d \left(\frac{\ee}{\eta} + \ee^{2/3}\right) \ll \frac{20^d}{\eta} \left(\frac{\ee}{\eta} + \ee^{2/3}\right) \vol(S), \]
as desired.
\end{proof}

While this finishes the analysis of $S$, for technical reasons later on we will need to cut off the corners of $S$ and consider instead the set
\[ \bar{S} = \{\ve{x} = (x_1,\cdots,x_d) \in S \colon \max(|x_1|,\cdots,|x_d|) \leq 1-d^{-10}\}. \]
We summarize the required properties of $\bar{S}$ in the following proposition.

\begin{proposition}\label{prop:S-bar}
Let $d$ be large, and suppose that $\eta \geq d^{-d}$. The set $\bar{S}$ defined above has the following properties.
\begin{enumerate}
\item $\vol(\bar{S}) = d\eta V_d(1 + O(d\eta))$.
\item The set $\bar{T} := \{(\ve{x}, \ve{y}) \colon \ve{x}, \ve{x} - \ve{y}, \ve{x} + \ve{y} \in \bar{S}\}$ has volume $\vol(\bar{T}) \ll (2\eta)^{d/2-1} \vol(\bar{S})^2$.
\item If $S' \subset \bar{S}$ is a measurable subset with $\vol(S') \geq (1 - 30^{-d}\eta^3 )\vol(\bar{S})$, then $\vol(S'+S') \geq \vol(\bar{S}+\bar{S}) - \tfrac{1}{90}\vol(\bar{S})$.
\end{enumerate}

\end{proposition}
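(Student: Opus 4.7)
The plan is to deduce all three claims from their counterparts for the untruncated $S$ (Lemma~\ref{lem:3ap-vol} and Proposition~\ref{prop:s'+s'}) after checking that the corner truncation costs almost nothing, in the precise sense that $\vol(S\setminus \bar S)$ is super-exponentially smaller than $25^{-d}\eta^3\vol(S)$.

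First I would estimate $\vol(S\setminus \bar S)$ directly. Any $\ve{x}\in S\setminus \bar S$ has some coordinate with $|x_i|>1-d^{-10}$, and since $\|\ve{x}\|\le 1$ this forces $\sum_{j\neq i}x_j^2 \le 1-(1-d^{-10})^2 \le 2d^{-10}$. A union bound over $i$ and the sign of $x_i$ gives
\[ \vol(S\setminus \bar S) \ll d^{-9}\,V_{d-1}\,(2d^{-10})^{(d-1)/2}. \]
Using $V_{d-1}\ll \sqrt{d}\,V_d$ and the hypothesis $\eta\ge d^{-d}$, the factor $(2d^{-10})^{(d-1)/2}$ decays like $d^{-5d}$, which is enough to guarantee $\vol(S\setminus \bar S)\le \tfrac12 \cdot 25^{-d}\eta^3\vol(S)$ once $d$ is sufficiently large. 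Part (1) is then immediate from $\vol(S) = d\eta V_d(1+O(d\eta))$ and $\vol(\bar S) = \vol(S)-\vol(S\setminus \bar S)$.

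Part (2) is essentially free, since $\bar T\subset T$ and Lemma~\ref{lem:3ap-vol} give $\vol(\bar T)\ll (2\eta)^{d/2-1}\vol(S)^2$, while part (1) yields $\vol(S)\le \tfrac{10}{9}\vol(\bar S)$ for $d$ large. For part (3), I would regard $S'\subset \bar S$ as a subset of $S$ and estimate
\[ \vol(S)-\vol(S') \le \vol(S\setminus \bar S) + 30^{-d}\eta^3\vol(\bar S) \le 25^{-d}\eta^3\vol(S), \]
where the last step uses the bound on $\vol(S\setminus\bar S)$ together with $25^{-d}-30^{-d}\ge \tfrac12\cdot 25^{-d}$ for $d$ large. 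Proposition~\ref{prop:s'+s'} (with $\ee=25^{-d}\eta^3$, which satisfies $\ee<\eta^{3/2}$) then gives
\[ \vol(S'+S') \ge \vol(S+S)-\tfrac{1}{100}\vol(S) \ge \vol(\bar S+\bar S)-\tfrac{1}{90}\vol(\bar S), \]
where the last inequality uses $\bar S+\bar S\subset S+S$ and $\vol(S)\le \tfrac{10}{9}\vol(\bar S)$.

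The only substantive step in the whole argument is the volume estimate $\vol(S\setminus \bar S)\le \tfrac12 \cdot 25^{-d}\eta^3\vol(S)$, and this is not a real obstacle: the super-exponential decay of $(2d^{-10})^{(d-1)/2}$ dwarfs the polynomial loss $\eta^3 \ge d^{-3d}$ and the single-exponential factor $25^{-d}$ once $d$ is taken large enough (concretely, any $d\gtrsim 40$ will do).
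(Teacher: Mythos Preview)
Your proof is correct and follows essentially the same approach as the paper's: both bound $\vol(S\setminus\bar S)$ via the $2d$ spherical caps of height $d^{-10}$ (you use the crude product bound on the cap, the paper integrates $(1-x^2)^{(d-1)/2}$, obtaining $\vol(S\setminus\bar S)\ll d^{-4d}\vol(S)$), and then deduce (1)--(3) exactly as you do by feeding $S'$ back into Proposition~\ref{prop:s'+s'} with $\ee=25^{-d}\eta^3$ and using $\bar S+\bar S\subset S+S$.
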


\begin{proof}
Note that $S \setminus \bar{S}$ is contained in the union of $2d$ ``caps'' of height $h = d^{-10}$, and each cap has volume
\[ \int_{1-h}^1 (1-x^2)^{(d-1)/2} V_{d-1} \d x \ll 2^d V_{d-1} \int_{1-h}^1 (1-x)^{(d-1)/2} \d x \ll 2^d V_{d-1} h^{(d+1)/2}. \]
Since $V_{d-1}/V_d \ll d^{1/2}$, we have
\[ \vol(S \setminus \bar{S}) \ll 3^d V_d h^{(d+1)/2} \ll d^{-4d} \vol(S).   \]
Hence (1) is clear from the estimate on $\vol(S)$, and (2) follows from Lemma~\ref{lem:3ap-vol}. To prove (3), note that if $S' \subset \bar{S}$ is a subset with $\vol(S') \geq (1-30^{-d}\eta^3) \vol(\bar{S})$, then
\[ \vol(S') \geq (1- 30^{-d}\eta^3 - O(d^{-4d})) \vol(S) \geq (1-25^{-d}\eta^3)\vol(S), \]
where the second inequality follows from the assumption that $\eta \geq d^{-d}$. Hence by Proposition~\ref{prop:s'+s'} we have
\[ \vol(S'+S') \geq \vol(S+S) - \tfrac{1}{100} \vol(S) \geq \vol(\bar{S}+\bar{S}) - \tfrac{1}{90}\vol(\bar{S}). \]
\end{proof}

\section{Non-polynomial bounds: Proof of Theorem~\ref{thm:non-poly}}

Now we discretize $\bar{S}$, the ``trimmed'' version of $S$. Let $d$ be large as in the previous section, and we set $\eta = 2^{-d}$ (say) so that the assumption $\eta \geq d^{-d}$ in Proposition~\ref{prop:S-bar} is satisfied. Let $M$ be sufficiently large in terms of $d,\eta$. We construct a discrete version $A \subset \Z^d$ of $\bar{S}$ as follows. For $\ve{a} \in \Z^d$, denote by $B_M(\ve{a})$ the hypercube $M^{-1}\ve{a} + M^{-1} \cdot [0,1]^d$, which has one corner at $M^{-1}\ve{a}$ and has side length $M^{-1}$. Define
\[ A := \{\ve{a} \in \Z^d \colon B_M(\ve{a}) \subset \bar{S}\}. \]
In particular $A$ lies inside the ball of radius $M$ centered at $0$, and moreover if $\ve{a} = (a_1,\cdots,a_d) \in A$ then $|a_i| \leq (1-d^{-10})M$ for each $i$.

\begin{lemma}\label{lem:A-size}
We have 
\[ (\vol(\bar{S})-o_{M\rightarrow\infty}(1)) M^d \leq |A| \leq \vol(\bar{S}) M^d, \]
where $o_{M\rightarrow\infty}(1)$ denotes a quantity that tends to $0$ as $M \rightarrow\infty$. 
%More generally, for any measurable subset $X \subset \bar{S}$ we have
%\[ (\vol(X)-o_{M\rightarrow\infty}(1)) M^d \leq |B| \leq \vol(X) M^d, \]
%where $B = \{\ve{a} \in A \colon B_M(\ve{a}) \subset X\}$, and the decay rate in $o_{M\rightarrow%\infty}(1)$ depends on $X$.
\end{lemma}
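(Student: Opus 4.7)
The plan is to exploit the fact that the hypercubes $B_M(\ve{a})$, as $\ve{a}$ ranges over $\Z^d$, tile $\R^d$ (up to a measure zero set) with each cube having volume $M^{-d}$. This reduces the problem to comparing $|A|$ with the count of cubes intersecting $\bar{S}$, and the discrepancy will come entirely from cubes meeting the boundary $\partial \bar{S}$.

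For the upper bound, first I would observe that the cubes $B_M(\ve{a})$ for $\ve{a} \in A$ are pairwise disjoint and all contained in $\bar{S}$, so summing volumes gives $|A| M^{-d} \leq \vol(\bar{S})$, which is the stated upper bound.

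For the lower bound, I would introduce the auxiliary set $A' = \{\ve{a} \in \Z^d \colon B_M(\ve{a}) \cap \bar{S} \neq \emptyset\}$. Since the cubes $B_M(\ve{a})$ for $\ve{a} \in A'$ cover $\bar{S}$, we get $|A'| \geq \vol(\bar{S}) M^d$. Then I would argue that $|A' \setminus A| = o_{M \to \infty}(M^d)$. Indeed, any $\ve{a} \in A' \setminus A$ has a cube that meets both $\bar{S}$ and its complement, so $B_M(\ve{a})$ intersects $\partial \bar{S}$; in particular $B_M(\ve{a})$ is contained in the $(\sqrt{d}/M)$-neighborhood of $\partial \bar{S}$. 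The boundary $\partial\bar{S}$ is a finite union of pieces of the two spheres $\|\ve{x}\| = 1$ and $\|\ve{x}\| = 1-\eta$ and the $2d$ hyperplanes $x_i = \pm(1-d^{-10})$, each of which has finite $(d-1)$-dimensional surface area depending only on $d$. By the standard Minkowski tube estimate in fixed dimension, the volume of the $r$-neighborhood of $\partial \bar{S}$ is $O_{d,\eta}(r)$ as $r \to 0$. Taking $r = \sqrt{d}/M$ gives $|A' \setminus A| \cdot M^{-d} = O_{d,\eta}(M^{-1})$, so $|A| \geq |A'| - O_{d,\eta}(M^{d-1}) \geq \vol(\bar{S})M^d - o_{M \to \infty}(M^d)$, completing the lower bound.

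Since $d$ and $\eta$ are fixed before $M$ is chosen, the error $O_{d,\eta}(M^{-1})$ is genuinely $o_{M \to \infty}(1)$, which matches the required form. No single step is really an obstacle here; the only point that needs any care is the tube estimate for $\partial \bar{S}$, but this follows routinely from the explicit description of $\partial\bar{S}$ as a finite union of pieces of smooth codimension-one manifolds of bounded area.
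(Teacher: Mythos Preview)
Your proof is correct and follows essentially the same approach as the paper: the upper bound is identical, and for the lower bound both arguments rest on the fact that the cubes straddling $\partial\bar{S}$ have negligible total volume. The paper simply invokes ``standard measure theory'' (i.e., Jordan measurability of $\bar{S}$) to assert $\vol\bigl(\bar{S}\setminus\bigcup_{\ve{a}\in A}B_M(\ve{a})\bigr)=o_{M\to\infty}(1)$, whereas you spell this out via the outer set $A'$ and a tube estimate on $\partial\bar{S}$, which has the minor advantage of yielding the explicit rate $O_{d,\eta}(M^{-1})$ rather than just $o(1)$.
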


\begin{proof}
Since $\bar{S}$ contains $\bigcup_{\ve{a} \in A} B_M(\ve{a})$, we have $|A| \leq \vol(\bar{S}) M^d$. In the other direction, by standard measure theory, $\bigcup_{\ve{a} \in A} B_M(\ve{a})$ is a better and better approximation to $\bar{S}$ as $M\rightarrow \infty$, in the sense that
\[ \vol\left(\bar{S} \setminus \bigcup_{\ve{a} \in A} B_M(\ve{a})\right) = o_{M\rightarrow\infty}(1). \]
This implies the lower bound for $|A|$.
%It suffices to prove the general statement. Since $X$ contains $\bigcup_{\ve{a} \in B} B_M(\ve{a})$, we have $|B| \leq \vol(X) M^d$. In the other direction, by standard measure theory, $\bigcup_{\ve{a} \in B} B_M(\ve{a})$ is a better and better approximation to $X$ as $M\rightarrow \infty$, in the sense that
%\[ \vol\left(X \setminus \bigcup_{\ve{a} \in B} B_M(\ve{a})\right) = o_{M\rightarrow\infty}(1). \]
%This implies the lower bound $|B| \geq (\vol(X) - o_{M\rightarrow\infty}(1)) M^d$.
\end{proof}

\begin{lemma}\label{lem:3ap-size}
The number of $3$-term arithmetic progressions in $A$ is $\ll 6^d\eta^{d/2-1}|A|^2$.
\end{lemma}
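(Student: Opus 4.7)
The plan is to transfer the continuous bound $\vol(\bar T)\ll (2\eta)^{d/2-1}\vol(\bar S)^2$ from Proposition~\ref{prop:S-bar}(2) to a discrete count via the hypercube construction of $A$. To every ordered triple $(\mathbf{a}-\mathbf{b},\mathbf{a},\mathbf{a}+\mathbf{b})$ of lattice points in $A$ forming a 3-term AP I will attach the region
\[ E_{\mathbf{a},\mathbf{b}} := \{(\mathbf{x},\mathbf{y})\in\R^{2d} : \mathbf{x}\in B_M(\mathbf{a}),\ \mathbf{x}-\mathbf{y}\in B_M(\mathbf{a}-\mathbf{b}),\ \mathbf{x}+\mathbf{y}\in B_M(\mathbf{a}+\mathbf{b})\}. \]
Since each of the three hypercubes above lies inside $\bar S$, we have $E_{\mathbf{a},\mathbf{b}}\subset \bar T$. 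Different 3-APs give essentially disjoint $E_{\mathbf{a},\mathbf{b}}$, because $(\mathbf{x},\mathbf{y})$ determines the unique hypercubes $B_M(\cdot)$ containing $\mathbf{x}$, $\mathbf{x}-\mathbf{y}$, $\mathbf{x}+\mathbf{y}$ up to sets of measure zero, and hence determines $(\mathbf{a},\mathbf{b})$.

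Next I will compute $\vol(E_{\mathbf{a},\mathbf{b}})$ exactly. Writing $\mathbf{x}=M^{-1}\mathbf{a}+\mathbf{u}/M$ with $\mathbf{u}\in[0,1]^d$ and then setting $\mathbf{z}=M(\mathbf{y}-M^{-1}\mathbf{b})$, the two remaining box conditions become, coordinate by coordinate, $z_i\in[-u_i,u_i]\cap[u_i-1,1-u_i]$, which has length $\min(2u_i,2(1-u_i))$. Integrating and accounting for the Jacobian $M^{-2d}$ gives
\[ \vol(E_{\mathbf{a},\mathbf{b}}) = M^{-2d}\left(\int_0^1 \min(2u,2(1-u))\,du\right)^{\!d} = 2^{-d}M^{-2d}. \]

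Let $N_3$ denote the number of 3-APs in $A$. Combining disjointness with $E_{\mathbf{a},\mathbf{b}}\subset\bar T$ and Proposition~\ref{prop:S-bar}(2),
\[ N_3\cdot 2^{-d}M^{-2d} \leq \vol(\bar T)\ll (2\eta)^{d/2-1}\vol(\bar S)^2, \]
so $N_3\ll 2^{3d/2}\eta^{d/2-1}(\vol(\bar S)M^d)^2$. Lemma~\ref{lem:A-size} gives $\vol(\bar S)M^d\leq 2|A|$ for $M$ sufficiently large (in terms of $d,\eta$), hence $N_3\ll 2^{3d/2}\eta^{d/2-1}|A|^2\ll 6^d\eta^{d/2-1}|A|^2$, as desired.

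There is no significant obstacle, since the continuous estimate $\vol(\bar T)\ll (2\eta)^{d/2-1}\vol(\bar S)^2$ is already in hand from Proposition~\ref{prop:S-bar}(2); the only work is the bookkeeping to verify $\vol(E_{\mathbf{a},\mathbf{b}})=2^{-d}M^{-2d}$ and disjointness. The small factor $2^{-d}$ in that volume is the main source of the exponential loss, but $2^{3d/2}\leq 6^d$ is more than enough slack for the claimed bound.
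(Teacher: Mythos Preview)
Your proof is correct and follows essentially the same strategy as the paper: associate to each 3-AP in $A$ a region inside $\bar T$, observe these regions are (essentially) disjoint, and compare total volume. Your region $E_{\ve{a},\ve{b}}$ coincides with the paper's $T_{\ve{a},\ve{b}}$; your volume computation $\vol(E_{\ve{a},\ve{b}})=2^{-d}M^{-2d}$ is in fact sharper than the paper's stated $(4M^2)^{-d}$, but both feed into the same final bound $\ll 6^d\eta^{d/2-1}|A|^2$.
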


\begin{proof}
Let $\bar{T}$ be defined as in Proposition~\ref{prop:S-bar}(2). Any $3$-term progression $\ve{a}, \ve{a}-\ve{b}, \ve{a}+\ve{b}$ in $A$ leads to a subset
\[ T_{\ve{a}, \ve{b}} = \{(M^{-1}(\ve{a} + \ve{u}_1), M^{-1}(\ve{b} + \ve{u}_2))\colon \ve{u}_1, \ve{u}_1-\ve{u}_2, \ve{u}_1+\ve{u}_2 \in [0,1]^d  \} \subset \bar{T}. \]
Since the sets $T_{\ve{a}, \ve{b}}$ for different choices of $(\ve{a}, \ve{b})$ are disjoint from each other, and the volume of each $T_{\ve{a},\ve{b}}$ is $(4M^2)^{-d}$, we can conclude that the number of such $(\ve{a}, \ve{b})$ is at most
\[ (4M^2)^d \vol(\bar{T}) \ll 4^d 2^{d/2-1} \eta^{d/2-1} (\vol(\bar{S}) M^d)^2 \ll 6^d \eta^{d/2-1} |A|^2, \]
where we used Lemma~\ref{lem:A-size} in the last inequality.
\end{proof}

\begin{remark}
If one wants to construct a $3$-AP-free set $A$ with this approach, then we would like $6^d \eta^{d/2-1} |A|^2$ to be smaller than $|A|$. Since $|A| \approx M^d$, we would need to require that $M$ is smaller than $\eta^{-1/2}$. This scale is certainly too coarse for the argument to work. However, the construction here is sufficient for the purpose of requiring $A$ to have few $3$-APs rather than none, and it allows us to analyze the sumset $A+A$ (and also  $A'+A'$ with $A'$ almost all of $A$) rigorously.
\end{remark}

For any $\ve{a}, \ve{a}' \in A$,  we have $B_M(\ve{a}+\ve{a}') \subset B_M(\ve{a}) + B_M(\ve{a}') \subset \bar{S}+\bar{S}$. It follows that
\[ M^{-d} |A+A| \leq \vol(\bar{S}+\bar{S}). \]
Now we study $A'+A'$ when $A'$ is almost all of $A$.

\begin{lemma}\label{lem:a'+a'}
If $A' \subset A$ is a subset with $|A'| \geq (1-100^{-d}\eta^3)|A|$, then $|A'+A'| \geq |A+A| - \tfrac{1}{80}|A|$.
\end{lemma}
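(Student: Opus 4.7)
The plan is to mimic the continuous proof of Proposition~\ref{prop:s'+s'} directly at the lattice level, bounding $|(A+A)\setminus (A'+A')|$ via a popular-representation step and a dyadic shell decomposition. Concretely, for $\ve{c}\in A+A$ set $R(\ve{c}):=A\cap (\ve{c}-A)$. If $\ve{c}\notin A'+A'$ then for every $\ve{a}\in R(\ve{c})$ at least one of $\ve{a}$, $\ve{c}-\ve{a}$ lies in $A\setminus A'$; using the involution $\ve{a}\mapsto \ve{c}-\ve{a}$ on $R(\ve{c})$ this forces $|R(\ve{c})\cap (A\setminus A')|\geq \tfrac{1}{2}|R(\ve{c})|$.

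For $t\in(0,2)$ let $D_t:=\{\ve{c}\in (A+A)\setminus (A'+A') : (2-t)M\leq \|\ve{c}\|\leq (2-t/2)M\}$, and choose a maximal packing $\ve{c}_1,\ldots,\ve{c}_m\in D_t$ with the $R(\ve{c}_i)$ pairwise disjoint. The discrete analogs of Lemma~\ref{lem:close-y-intersect} and Lemma~\ref{lem:ry-bound} give $D_t\subset \bigcup_i B(\ve{c}_i,4t^{1/2}M)$, whence $|D_t|\ll m(4t^{1/2}M)^dV_d$, and $|R(\ve{c}_i)|\gg M^dV_{d-1}2^{-d}\eta\, t^{(d-1)/2}\min(t,\eta)$. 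Combined with the halving step above and $|A\setminus A'|\leq 100^{-d}\eta^3|A|$, this yields a bound on $|D_t|$ of exactly the same form as the continuous estimate $\vol(D_t)\lesssim 20^dV_d\,\epsilon\,t^{1/2}/\min(t,\eta)$ from the proof of Proposition~\ref{prop:s'+s'} (with $\epsilon$ replaced by $100^{-d}\eta^3$), supplemented by the trivial $|D_t|\ll d\cdot 2^dM^dV_d\,t$ for small $t$. Summing both bounds dyadically over $t$, exactly as at the end of the proof of Proposition~\ref{prop:s'+s'}, yields $|(A+A)\setminus (A'+A')|=\sum_t|D_t|\leq |A|/80$, which is the claim.

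The main obstacle is justifying the discrete versions of Lemmas~\ref{lem:ry-bound} and~\ref{lem:close-y-intersect}: one has to show the lattice-point count $|R(\ve{c})|$ tracks the Lebesgue volume $M^d\vol(R_{M^{-1}\ve{c}})$ with a boundary error of order $O_d(M^{d-1})$, and similarly for the separation criterion. This is routine, following from the construction of $A$ (each $\ve{a}\in A$ has $B_M(\ve{a})\subset\bar S$, so $M^{-1}A$ approximates $\bar S$ with only $O_d(M^{d-1})$ lattice-boundary error) provided $M$ is chosen sufficiently large in terms of $d$ and $\eta$ for these lower-order terms to be absorbed into the $1/80$ budget. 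Throughout I would prefer this direct route over feeding $S':=\bigcup_{\ve{a}\in A'}B_M(\ve{a})$ into Proposition~\ref{prop:S-bar}(3) and then trying to pass back from $\vol(S'+S')$ to $|A'+A'|$, because the latter requires relating $|A'+A'|$ to $|(A'+A')+\{0,1\}^d|$ and would incur a wasteful factor of $2^d$.
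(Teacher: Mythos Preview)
Your direct lattice argument is plausible and can be pushed through, but it is not the route the paper takes, and your reason for rejecting the continuous route misses a trick.

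The paper \emph{does} feed a set $S'\subset\bar S$ into Proposition~\ref{prop:S-bar}(3), but not the naive $S'=\bigcup_{\ve a\in A'}B_M(\ve a)$. Instead it first passes to the \emph{interior}
\[
\Int(A'):=\{\ve a\in A':\ve a+\{0,1\}^d\subset A'\},
\]
which satisfies $|A\setminus\Int(A')|\leq 2^d|A\setminus A'|\leq 50^{-d}\eta^3|A|$, and takes $S':=\bigcup_{\ve a\in\Int(A')}B_M(\ve a)$. The point is that for $\ve a,\ve b\in\Int(A')$ and any $\ve u\in\{0,1\}^d$ one has $\ve a+\ve u\in A'$, hence $\ve a+\ve b+\ve u\in A'+A'$; therefore
\[
S'+S'=\bigcup_{\ve a,\ve b\in\Int(A')}\bigcup_{\ve u\in\{0,1\}^d}B_M(\ve a+\ve b+\ve u)\subset\bigcup_{\ve s\in A'+A'}B_M(\ve s),
\]
so $|A'+A'|\geq M^d\vol(S'+S')$ with no loss at all. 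The $2^d$ cost you worried about is paid on the \emph{input} side (shrinking $A'$ to $\Int(A')$), where it is harmless because it multiplies the tiny quantity $|A\setminus A'|$, not on the output side where it would multiply $|A'+A'|$. After this reduction the continuous Proposition~\ref{prop:S-bar}(3) applies verbatim and the proof is five lines.

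Compared to this, your plan requires you to reprove Lemma~\ref{lem:ry-bound} at the lattice level. That lower bound concerns a region $R_{\ve y}$ whose width in the radial direction is only $\eta$, so the boundary error in the lattice count is governed by $\eta^{-1}$ rather than just by the diameter; you would need to check carefully that for $t\geq \ee^{2/3}$ (with $\ee=100^{-d}\eta^3$) the choice ``$M$ large in terms of $d$'' suffices to make $|R(\ve c)|$ comparable to $M^d\vol(R_{M^{-1}\ve c})$. It can be done, but it is more bookkeeping than the paper's interior trick, not less.
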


\begin{proof}
 Let $\Int(A')$ be the set of $\ve{a} \in A'$ with $\ve{a} + \{0,1\}^d \subset A'$ (geometrically think of $\Int(A')$ as the interior of $A'$). Since $A \setminus \Int(A')$ is contained in $A \setminus A' - \{0,1\}^d$, we have
\[ |\Int(A')| \geq (1 - 50^{-d}\eta^3) |A|. \]
Now consider
\[ S' := \bigcup_{\ve{a} \in \Int(A')} B_M(\ve{a}) \subset \bar{S}. \]
We have 
\[ \vol(S') = M^{-d} |\Int(A')| \geq (1-50^{-d}\eta^3) M^{-d} |A| \geq (1-30^{-d}\eta^3) \vol(\bar{S}), \]
where the last inequality folows from Lemma~\ref{lem:A-size}. Hence by Proposition~\ref{prop:S-bar}(3) we have
\[ \vol(S'+S') \geq \vol(\bar{S}+\bar{S}) - \tfrac{1}{90}\vol(\bar{S}). \]
Now note that $S'+S'$ is the union of the hypercubes
\[ B_M(\ve{a}) + B_M(\ve{b}) = \bigcup_{\ve{u} \in \{0,1\}^d} B_M(\ve{a}+\ve{b}+\ve{u}) \]
with $\ve{a}, \ve{b} \in \Int(A')$. But if $\ve{a},\ve{b} \in \Int(A')$, then $\ve{a} + \ve{u} \in A'$, and thus each $\ve{a} + \ve{b} + \ve{u}$ lies in $A'+A'$. Hence we have
\[ S' + S' \subset \bigcup_{\ve{s} \in A'+A'} B_M(\ve{s}), \]
and so 
\[ 
\begin{split}
|A'+A'| &\geq M^d \vol(S'+S') \geq M^d \vol(\bar{S}+\bar{S}) - \tfrac{1}{90} M^d \vol(\bar{S})  \\
& \geq |A+A| - \tfrac{1}{80} |A|.  \\
\end{split}
\]
\end{proof}

Now we can construct our counterexample:

\begin{proposition}\label{prop:non-poly-large-K}
For all sufficiently large $d$, one can find a subset $A \subset \Z$ with $A = -A$ with the following properties.
\begin{enumerate}
\item $A$ lies in an interval of length $d^{O(d)}|A|$.
\item $|A+A| \ll 4^d|A|$.
\item $(A+A)\setminus (A+_{\Gamma}A)$ contains the set $2\cdot A := \{2a \colon a \in A\}$ for some $\Gamma \subset A \times A$ with $|\Gamma| \geq (1-2^{-d^2/3})|A|^2$.
\item $|A'+A'| \geq |A+A| - |A|/80$ for any $A' \subset A$ with $|A'| \geq (1 - 800^{-d})|A|$.
\item For any $a \in A$ there are $\geq d^{-O(d)}|A|$ elements $b \in A$ with $|a-b| \leq 0.1|A|$.
\end{enumerate}
\end{proposition}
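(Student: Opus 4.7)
The plan is to construct $A$ as the Freiman 2-isomorphic image in $\Z$ of a symmetric discrete annulus in $\Z^d$. Fix $\eta = 2^{-d}$ and a sufficiently large integer $M$ (say $M \geq 10^{d+1} d^{d/2}$). Modify the construction of Section~4 to use \emph{centered} boxes $B_M(\ve{a}) = \ve{a}/M + [-1/(2M), 1/(2M)]^d$, and set $A_0 = \{\ve{a} \in \Z^d : B_M(\ve{a}) \subset \bar S\}$. By symmetry of $\bar S$, $A_0 = -A_0$. The statements of Lemmas~\ref{lem:A-size}, \ref{lem:3ap-size}, and \ref{lem:a'+a'} carry over without change for centered boxes. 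Set $N = 5M$ and define $\phi \colon \Z^d \to \Z$ by $\phi(\ve{a}) = \sum_{i=1}^d a_i N^{i-1}$. Since $|a_i| \leq M$ for $\ve{a} \in A_0$ and $N > 4M$, $\phi$ is a Freiman 2-isomorphism on $A_0$. Put $A = \phi(A_0) \subset \Z$; then $A = -A$ by additivity of $\phi$.

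Properties (1)--(4) transfer from $A_0$ to $A$ via the Freiman correspondence. For (1), $|\phi(\ve{a})| \leq M(N^d-1)/(N-1) \ll 5^d M^d$, so $A$ lies in an interval of length $O(5^d M^d)$, giving ratio $O(5^d/\vol(\bar S)) \leq d^{O(d)}$ to $|A| \gg M^d \vol(\bar S)$. For (2), $|A+A| = |A_0 + A_0| \leq M^d \vol(\bar S + \bar S) \leq 2^d V_d M^d$, so $|A+A|/|A| \ll 2^d/(d\eta) \ll 4^d$. For (3), take $\Gamma = \{(a,b) \in A \times A : (a+b)/2 \notin A\}$, so that $A +_\Gamma A = (A+A) \setminus 2 \cdot A$ and hence $(A+A) \setminus (A+_\Gamma A) = 2 \cdot A$. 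The complement of $\Gamma$ in $A \times A$ is in bijection with ordered $3$-APs $(a,c,b) \in A^3$ satisfying $a+b=2c$, and by Freiman equals the $3$-AP count in $A_0$; Lemma~\ref{lem:3ap-size} bounds this by $\ll 6^d \eta^{d/2-1}|A|^2 \ll 12^d \cdot 2^{-d^2/2}|A|^2 \leq 2^{-d^2/3}|A|^2$ for $d$ sufficiently large. For (4), Lemma~\ref{lem:a'+a'} transfers directly through the Freiman correspondence; note that $100^{-d} \eta^3 = 800^{-d}$.

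Property (5) is the main obstacle, as it concerns metric distances in $\Z$ after the Freiman embedding rather than in $\Z^d$. For each $\ve{a} \in A_0$, consider the top-coordinate slice $\Sigma_{\ve{a}} = \{\ve{b} \in A_0 : b_d = a_d\}$. For any $\ve{b} \in \Sigma_{\ve{a}}$, the lower coordinates satisfy $|b_i - a_i| \leq 2M$, so
\[
|\phi(\ve{a}) - \phi(\ve{b})| \leq 2M \cdot \frac{N^{d-1}-1}{N-1} \leq \frac{N^{d-1}}{2}.
\]
The lower bound on $M$ is chosen precisely to force $N^{d-1}/2 \leq 0.1|A|$, so $\phi(\Sigma_{\ve{a}}) \subset [\phi(\ve{a}) - 0.1|A|, \phi(\ve{a}) + 0.1|A|]$. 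It remains to show $|\Sigma_{\ve{a}}| \gg d^{-O(d)}|A|$. By the same discretization argument as Lemma~\ref{lem:A-size}, $|\Sigma_{\ve{a}}| \gg M^{d-1} \cdot \vol_{d-1}(\bar S \cap \{x_d = c\})$ where $c := a_d/M$ satisfies $|c| \leq 1-d^{-10}$. A direct thin-annulus calculation yields
\[
\vol_{d-1}(\bar S \cap \{x_d = c\}) \gg d\eta V_{d-1}(1-c^2)^{(d-3)/2},
\]
valid since $(2\eta-\eta^2)/(1-c^2) = O(2^{-d} d^{10}) = o(1)$ for $d$ large. The worst case $|c| = 1-d^{-10}$ yields slice volume $\gg d\eta V_{d-1}(2d^{-10})^{(d-3)/2}$. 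Combining this with $|A| \sim d\eta V_d M^d$, $V_{d-1}/V_d = O(\sqrt d)$, and our choice of $M$, a short computation gives $|\Sigma_{\ve{a}}|/|A| \geq d^{-O(d)}$, as required.
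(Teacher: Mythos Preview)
Your construction is essentially the paper's: discretise the trimmed annulus $\bar S$ and embed into $\Z$ via a base-$N$ map. Using centered boxes to guarantee $A_0=-A_0$ is a clean touch (the paper's corner boxes do not literally give this). Two technical points need care, however.

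First, Lemma~\ref{lem:a'+a'} does \emph{not} carry over verbatim to centered boxes: the identity $B_M(\ve a)+B_M(\ve b)=\bigcup_{\ve u\in\{0,1\}^d}B_M(\ve a+\ve b+\ve u)$ in its proof is specific to corner boxes, since the Minkowski sum of two centered grid boxes of side $1/M$ is a box of side $2/M$ whose corners are at half-integer multiples of $1/M$. The fix is to redefine $\Int(A')=\{\ve a:\ve a+\{-1,0,1\}^d\subset A'\}$ and use the containment $B_M(\ve a)+B_M(\ve b)\subset\bigcup_{\ve u\in\{-1,0,1\}^d}B_M(\ve a+\ve b+\ve u)$; the resulting factor $3^d$ in place of $2^d$ is harmless for the numerics.

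Second, your argument for (5) via the \emph{exact} slice $\Sigma_{\ve a}=\{b_d=a_d\}$ yields $|\Sigma_{\ve a}|/|A|\asymp M^{-1}\cdot d^{-O(d)}$, so you need an \emph{upper} bound $M\le d^{O(d)}$, not just the lower bound ``$M\ge 10^{d+1}d^{d/2}$'' that you wrote; you must fix $M$ at a specific value of size $d^{O(d)}$ and check that the $o_{M\to\infty}(1)$ errors in the discretisation lemmas are already negligible at that scale (they are). The paper sidesteps this by using a thin slab $|x_d-a_d/M|\le(30d)^{-d}$ instead of a hyperplane slice: the slab has positive $d$-dimensional volume, so one gets $\ge d^{-O(d)}|A|$ nearby points uniformly in all large $M$. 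Either route works once $M$ is pinned down.
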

 
This already shows that, when $G = \Z$ in Theorem~\ref{thm:99bsg}, $\delta$ cannot depend polynomially on $K/\ee$. More precisely, in Theorem~\ref{thm:99bsg} we must have
\[ \delta \leq \exp\left( -c \left(\log \tfrac{K}{\ee}\right)^2\right) \]
for some absolute constant $c>0$.

\begin{proof}
Let $d$ be large, let  $\eta = 2^{-d}$,  and let $M$ be sufficiently large in terms of $d$. Let $\wt{A} \subset \Z^d$ be the set constructed above. Take $A$ to be the image of $\wt{A}$ under the map  $\pi \colon \Z^d \to \Z$ defined by
\[ \pi(\ve{a}) = a_1 + a_2(10M) + \cdots + a_d(10M)^{d-1} \]
for $\ve{a} = (a_1,\cdots,a_d)$. Since $\wt{A} \subset [-M,M]^d$, $\pi$ is a Freiman isomorphism from $\wt{A}$ to $A$, and thus for the properties (2), (3), and (4), it suffices to prove them with $A$ replaced by $\wt{A}$. 

For (1), $A$ lies in an interval of length 
\[ (10M)^d \ll 10^d \frac{|A|}{\vol(\bar{S})} \ll 10^d \frac{|A|}{d\eta V_d} \leq d^{O(d)}|A|. \]

For (2), we have
\[ |A+A| \leq M^d \vol(\bar{S}+\bar{S}) \leq M^d \cdot 2^d V_d \ll (2M)^d\frac{\vol(S)}{d\eta} \ll \frac{2^d}{d\eta} |A| \ll 4^d |A|. \]

For (3), we define
\[ \Gamma = \{(\ve{a}, \ve{a}') \in \wt{A} \times \wt{A} \colon (\ve{a}+\ve{a}')/2 \notin \wt{A}\}, \]
so that $\wt{A}+_{\Gamma}\wt{A}$ misses the elements $2\ve{a}$ with $\ve{a} \in \wt{A}$. Moreover, by Lemma~\ref{lem:3ap-size}, the number of pairs $(\ve{a},\ve{a}') \in \wt{A} \times \wt{A}$ not in $\Gamma$ is $\ll 6^d \eta^{d/2-1} |A|^2 \leq 2^{-d^2/3}|A|^2$, as desired. 

For (4), it follows immediately from Lemma~\ref{lem:a'+a'} since $100^{-d}\eta^3 = 800^{-d}$.

It remains to establish (5). This is the place where we needed to trim the corners of $S$, so that for any $\ve{a} = (a_1,\cdots,a_d) \in \wt{A}$ we have $|a_i| \leq (1-d^{-10})M$ for each $i$. Let 
\[ a = a_1 + a_2(10M) + \cdots + a_d(10M)^{d-1} \in A. \]
Define $X \subset \bar{S}$ by
\[ X := \{\ve{x} = (x_1,\cdots,x_d) \in \bar{S} \colon |x_d - M^{-1}a_d| \leq (30d)^{-d}\}, \]
and then let 
\[ B := \{\pi(\ve{a}) \colon \ve{a} \in \wt{A}, \ \ B_M(\ve{a}) \cap X \neq \emptyset\}. \]
We claim that all elements $b \in B$ are close to $a$ in the sense that $|a-b| \leq 0.1|A|$. Indeed, if
\[ b = b_1 + b_2(10M) + \cdots + b_d(10M)^{d-1} \in B, \]
then $|a_d - b_d| \leq (30d)^{-d}M+1$ by the definition of $X,B$, and hence
\[ |a-b| \leq (10M)^{d-1} + |a_d-b_d|(10M)^{d-1} \leq 2(10M)^{d-1} + (3d)^{-d} M^d. \]
This is $\leq 0.1|A|$ as claimed, because
\[ |A| \gg \vol(\bar{S})M^d \gg d\eta V_d M^d \geq d\eta d^{-d/2} M^d \geq (2d)^{-d}M^d.  \]
Hence it suffices to show that $|B| \geq d^{-O(d)}|A|$. By the construction of $B$ from $X$, we have $|B| \geq M^d \vol(X)$, and $\vol(X)$ can be estimated via an integral:
\[ 
\begin{split}
\vol(X) &= \int_{M^{-1}a_d - (30d)^{-d}}^{M^{-1}a_d + (30d)^{-d}} \left[(1-x^2)^{(d-1)/2} - ((1-\eta)^2 - x^2)^{(d-1)/2}\right] V_{d-1} \d x \\
&\gg \eta V_{d-1} \int_{M^{-1}a_d - (30d)^{-d}}^{M^{-1}a_d + (30d)^{-d}} (1-x^2)^{(d-3)/2} \d x\\
&\gg \eta V_{d-1} (30d)^{-d} \min_{x\colon |x-M^{-1}a_d| \leq (30d)^{-d}} (1-x^2)^{(d-3)/2}.
\end{split}
\]
Since $M^{-1}a_d \in [-1+d^{-10}, 1-d^{-10}]$, the minimum above is over $x$ with $x \in [-1+d^{-9}, 1-d^{-9}]$, and hence $\vol(X) \gg d^{-O(d)}$. This proves that $|B| \gg d^{-O(d)}M^d \geq d^{-O(d)}|A|$, as desired.
\end{proof}

Finally, we modify the construction above to get counterexamples whose doubling constant can be arbitrarily close to $2$. The following result clearly implies Theorem~\ref{thm:non-poly}, by taking $d =  c\log D/\log\log D$ for some small absolute constant $c>0$.

\begin{proposition}
Let $\lambda \in (0,1/2)$. Let $d$ be large, and set $\delta = 2^{-d^2/3}$, $\ee = \lambda d^{-Cd}$ for some large absolute constant $C>0$. There exists a subset $A \subset \Z$ with $|A+A| \leq (2+\lambda)|A|$, and a subset $\Gamma \subset A \times A$ with $|\Gamma| \geq (1-\delta)|A|^2$, such that 
\[ |A'+A'| > |A+_{\Gamma}A| + \ee |A| \]
for all $A' \subset A$ with $|A'| \geq (1-\ee)|A|$.
\end{proposition}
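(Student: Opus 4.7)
The plan is to build $A$ as the disjoint union $A = P \cup \wt{A}$, where $P$ is a long arithmetic progression and $\wt{A}$ is a shifted copy of the set from Proposition~\ref{prop:non-poly-large-K} applied with the same parameter $d$. Concretely, let $L$ denote the length of an interval containing $\wt{A}$, so $L \leq d^{O(d)}|\wt{A}|$ by property~(1); set $N = \lceil 4L/\lambda \rceil$ and $P = \{0, 1, \ldots, N-1\}$; and after translation, assume $\wt{A} \subset [N+L, N+2L-1]$ with $\min \wt{A} = N+L$. Take $\Gamma = (A \times A) \setminus ((\wt{A} \times \wt{A}) \setminus \wt{\Gamma})$, where $\wt{\Gamma}$ is the subset from Proposition~\ref{prop:non-poly-large-K}(3). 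Since only pairs in $\wt{A} \times \wt{A}$ are removed, $|A \times A \setminus \Gamma| \leq 2^{-d^2/3}|\wt{A}|^2 \leq \delta |A|^2$.

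The sumset analysis exploits a disjoint-range structure: $P+P \subset [0, 2N-2]$, $P+\wt{A} \subset [N+L, 2N+2L-2]$, and $\wt{A}+\wt{A} \subset [2N+2L, 2N+4L-2]$, with the first two lying in $[0, 2N+2L-2]$ and disjoint from the third. Property~(2) then yields $|A+A| \leq 2N+2L-1+4^d|\wt{A}|$, and the choice $N = \lceil 4L/\lambda \rceil$ combined with $L = d^{O(d)}|\wt{A}|$ gives $|A+A| \leq (2+\lambda)|A|$ for $d$ large. Moreover, $A+_\Gamma A = (P+P) \cup (P+\wt{A}) \cup (\wt{A}+_{\wt{\Gamma}}\wt{A})$, and by property~(3) this misses the set $2 \cdot \wt{A} \subset [2N+2L, 2N+4L-2]$, which is disjoint from the first two pieces. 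Hence $|A+_\Gamma A| \leq |A+A| - |\wt{A}|$.

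The crux is to prove $|A+A| - |A'+A'| < |\wt{A}| - \ee|A|$ for every $A' \subset A$ with $|A'| \geq (1-\ee)|A|$, which together with the above yields the required strict inequality. Decompose $A' = P' \cup \wt{A}''$ with $P' = A' \cap P$ and $\wt{A}'' = A' \cap \wt{A}$, and let $r_P = |P \setminus P'|$, $r_{\wt{A}} = |\wt{A} \setminus \wt{A}''|$, so $r_P + r_{\wt{A}} \leq \ee|A|$. A standard representation-count in the AP $P$ gives $P'+P' \supset [2r_P, 2N-2-2r_P]$. For any $s \in P+\wt{A}$, the number of representations $s = p + a$ with $p \in P$, $a \in \wt{A}$ is $|\wt{A} \cap [s-N+1, s]|$; using property~(5) at $\min\wt{A}$ and $\max\wt{A}$, this count is at least $d^{-O(d)}|\wt{A}|$ outside two edge ranges of total size $\leq 0.2|\wt{A}|$. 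Choosing $C$ large enough that $\ee|A| \leq d^{-O(d)}|\wt{A}|/2$ (permitted since $\ee|A| \leq 8 d^{O(d)-Cd}|\wt{A}|$), at least one such representation survives in $P' \times \wt{A}''$, so $s \in P'+\wt{A}''$. Therefore the part of $[0, 2N+2L-2]$ not covered by $(P'+P') \cup (P'+\wt{A}'')$ consists of at most $2r_P$ integers at the left end (missed by $P'+P'$ and not in $P+\wt{A}$) plus $\leq 0.1|\wt{A}|$ integers at the right end of $P+\wt{A}$ (not reached by $P+P$); the opposite edges are recovered by the other summand. For the third piece, property~(4) gives $|\wt{A}''+\wt{A}''| \geq |\wt{A}+\wt{A}| - |\wt{A}|/80$, provided $r_{\wt{A}} \leq 800^{-d}|\wt{A}|$, which again holds for $C$ large.

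Summing, $|A+A| - |A'+A'| \leq 2r_P + 0.1|\wt{A}| + |\wt{A}|/80 \leq 2\ee|A| + 0.1125|\wt{A}|$, and a final adjustment of $C$ to ensure $\ee|A| \leq |\wt{A}|/10$ makes this strictly less than $|\wt{A}| - \ee|A|$. The main technical obstacle is the bound on the loss in the mixed sumset $P+\wt{A}$: near the endpoints of $\wt{A}$ a generic sum has very few representations, and only the local density of $\wt{A}$ supplied by property~(5) of Proposition~\ref{prop:non-poly-large-K} prevents the uncovered portion from being comparable to $|\wt{A}|$.
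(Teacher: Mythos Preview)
Your proof is correct and follows essentially the same approach as the paper: append a long arithmetic progression to the set from Proposition~\ref{prop:non-poly-large-K} to push the doubling constant close to $2$, then bound the loss $|A+A|-|A'+A'|$ piece by piece using properties (4) and (5). Your arrangement---placing $\wt{A}$ to the right of $P$ with a gap so that $\wt{A}+\wt{A}$ is disjoint from $(P+P)\cup(P+\wt{A})$, which lets all of $2\cdot\wt{A}$ be missed by $A+_{\Gamma}A$---is slightly cleaner than the paper's (which instead uses the symmetry $A_0=-A_0$ and the assumption $-N\in A_0$ to isolate half of $2\cdot A_0$ in the nonpositive range), and your representation-count treatment of $P'+P'$ and $P'+\wt{A}''$ differs in detail from the paper's use of a single surviving element $b\in A_0'$ near $-N$, but the underlying idea and the use of each of the five properties is the same.
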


\begin{proof}
First let $A_0 \subset \Z$ be the set $A$ constructed in Proposition~\ref{prop:non-poly-large-K}. By property (1), $A_0 \subset \{-N,\cdots,N\}$ for some $N = d^{O(d)}|A_0|$, and we may assume that $-N \in A_0$. We will take
\[ A = A_0 \cup \{N+1, \cdots, LN\}, \]
where $L = \lceil 10/\lambda\rceil$. We have
\[ A+A = (A_0 + A_0) \cup \{1, \cdots, 2LN\}. \]
Since $|A| > (L-1)N$ and $|A+A| \leq (2L+2)N+1$, our choice of $L$ ensures that $|A+A| \leq (2+\lambda)|A|$. Let $\Gamma_0 \subset A_0 \times A_0$ be the set from Proposition~\ref{prop:non-poly-large-K}(3). To form $\Gamma$, we remove  from $A \times A$ those pairs in $(A_0 \times A_0)\setminus \Gamma_0$, so that $(A \times A) \setminus \Gamma = (A_0 \times A_0)\setminus \Gamma_0$. Then
\[ |\Gamma| \geq |A|^2 - 2^{-d^2/3}|A_0|^2 \geq \left(1- 2^{-d^2/3}\right) |A|^2 = (1-\delta) |A|^2. \]
Since all non-positive elements in $A+A$ must come from $A_0+A_0$,  we see that $A+_{\Gamma}A$ misses all the elements $2a$ with $a \in A_0$ and $a \leq 0$, by the symmetry of $A_0$ we have
\[ |A+_{\Gamma}A| \leq |A+A| - \tfrac{1}{2} |A_0|. \]
Now let $A' \subset A$ be a subset with $|A'| \geq (1-\ee) |A|$. We can write $A' = A_0' \cup I$, where $A_0' \subset A_0$ and $I \subset \{N+1,\cdots,LN\}$ satisfy the inequalities
\[ |A_0'| \geq |A_0| - \ee|A| \geq (1 - Ld^{O(d)}\ee)|A_0|\ \ \hbox{and}  \ \ |I| \geq (L-1)N - \ee |A|. \]
We need to bound the number of sums in $A+A$ that are missing in $A'+A'$. They come in three types:
\[
\begin{split}
U_1 &:= ((A+A) \setminus (A'+A')) \cap \{-2N, \cdots, 0\}, \\
U_2 &:= ((A+A) \setminus (A'+A')) \cap \{1, \cdots, 2N\}, \\
U_3 &:= ((A+A) \setminus (A'+A')) \cap \{2N+1, \cdots, 2LN\}. \\
\end{split}
\]

To bound $|U_1|$, note that any sum in the range $\{-2N,\cdots,0\}$ must come from $A_0+A_0$, so that $U_1 \subset (A_0+A_0)\setminus (A_0'+A_0')$. By choosing the absolute constant $C$ in the definition $\ee = \lambda d^{-Cd}$ large enough, we can ensure that $Ld^{O(d)}\ee \leq 800^{-d}$, and thus by property (4) of $A_0$ we have
\[ |U_1| \leq |(A_0+A_0) \setminus (A_0'+A_0')| \leq \tfrac{1}{80}|A_0|. \]

To bound $|U_2|$, note that by property (5), there are at least $d^{-O(d)}|A_0| \geq L^{-1}d^{-O(d)}|A|$ elements $b \in A_0$ with $b \leq -N+0.1|A_0|$. Thus after removing at most $\ee |A|$ elements from $A_0$, there still exists some $b \in A_0'$ with $b \leq -N+0.1|A_0|$. Then $b + I \subset A_0'+A_0'$, and $b+I$ covers all but at most $0.1|A_0| + \ee |A|$ elements of $\{1,\cdots,2N\}$. Hence
\[ |U_2| \leq 0.1|A_0| + \ee |A| \leq 0.1|A_0| + Ld^{O(d)}\ee |A_0| \leq 0.15|A_0|. \]

To bound $|U_3|$, note that $I+I \subset \{2N+1,\cdots,2LN\}$ and 
\[ |I+I| \geq 2|I|-1 \geq 2(L-1)N - 2\ee|A| - 1 \geq 2(L-1)N - 3\ee |A|. \]
Hence $|U_3| \leq 3\ee |A| \leq 0.05|A_0|$.

Putting this together, we have
\[ |(A+A) \setminus (A'+A')| = |U_1| + |U_2| + |U_3| < 0.4|A_0|, \]
and hence
\[ |A'+A'| \geq |A+A| - 0.4|A_0| \geq |A+_{\Gamma}A| + 0.1 |A_0| > |A+_{\Gamma}A| + \ee |A|. \]
\end{proof}

%%% AUTHOR: optional appendix here

%%% AUTHOR: optional acknowledgments here
\section*{Acknowledgments} %%  you may comment this out if no Ackno
I would like to thank Ben Green for helpful discussions, and the anonymous referee for valuable suggestions.

%%% AUTHOR:
%%% Bibliography goes here. Note that the arXiv cannot process bibtex
%%% or biber bibliographies.  Example of acceptable bibliograpy format:
\bibliographystyle{amsplain}

\begin{thebibliography}{10}

\bibitem{CLP17}
E.~Croot, V.~F. Lev, and P.~P. Pach.
\newblock Progression-free sets in {$\Bbb Z^n_4$} are exponentially small.
\newblock {\em Ann. of Math. (2)}, 185(1):331--337, 2017.

\bibitem{EG17}
J.~S. Ellenberg and D.~Gijswijt.
\newblock On large subsets of {$\Bbb F^n_q$} with no three-term arithmetic
  progression.
\newblock {\em Ann. of Math. (2)}, 185(1):339--343, 2017.

\bibitem{Fox11}
J.~Fox.
\newblock A new proof of the graph removal lemma.
\newblock {\em Ann. of Math. (2)}, 174(1):561--579, 2011.

\bibitem{FL17}
J.~Fox and L.~M. Lov\'{a}sz.
\newblock A tight bound for {G}reen's arithmetic triangle removal lemma in
  vector spaces.
\newblock {\em Adv. Math.}, 321:287--297, 2017.

\bibitem{GKM15}
A.~Granville, D.~Koukoulopoulos, and K.~Matom\"aki.
\newblock When the sieve works.
\newblock {\em Duke Math. J.}, 164(10):1935--1969, 2015.

\bibitem{Gre05}
B.~Green.
\newblock A {S}zemer\'edi-type regularity lemma in abelian groups, with
  applications.
\newblock {\em Geom. Funct. Anal.}, 15(2):340--376, 2005.

\bibitem{GR07}
B.~Green and I.~Z. Ruzsa.
\newblock Freiman's theorem in an arbitrary abelian group.
\newblock {\em J. Lond. Math. Soc. (2)}, 75(1):163--175, 2007.

\bibitem{GW10}
B.~Green and J.~Wolf.
\newblock A note on {E}lkin's improvement of {B}ehrend's construction.
\newblock In {\em Additive number theory}, pages 141--144. Springer, New York,
  2010.

\bibitem{KSV09}
D.~Kr\'{a}l, O.~Serra, and L.~Vena.
\newblock A combinatorial proof of the removal lemma for groups.
\newblock {\em J. Combin. Theory Ser. A}, 116(4):971--978, 2009.

\bibitem{LS95}
V.~F. Lev and P.~Y. Smeliansky.
\newblock On addition of two distinct sets of integers.
\newblock {\em Acta Arith.}, 70(1):85--91, 1995.

\bibitem{Ruz94}
I.~Z. Ruzsa.
\newblock Generalized arithmetical progressions and sumsets.
\newblock {\em Acta Math. Hungar.}, 65(4):379--388, 1994.

\bibitem{SX18}
X.~Shao and W.~Xu.
\newblock A robust version of {F}reiman's $3k-4$ theorem and applications.
\newblock {\em Math. Proc. of the Cambridge Philos. Soc.}, 166(3):567-581, 2019.

\bibitem{Sis18}
O.~Sisask.
\newblock Convolutions of sets with bounded $vc$-dimension are uniformly
  continuous.
\newblock arXiv:1802.02836.

\bibitem{TV06}
T.~Tao and V.~Vu.
\newblock {\em Additive combinatorics}, volume 105 of {\em Cambridge Studies in
  Advanced Mathematics}.
\newblock Cambridge University Press, Cambridge, 2006.

\end{thebibliography}

%% AUTHOR: You can generate such a bibliography from a .bib file by 
%% running pdflatex/bibtex/pdflatex/pdflatex and then pasting the .bbl file
%% between \begin{thebibliography} and \end{bibliography}

%%% AUTHOR: Include a short description of each author following the
%%% structure below. Use the same short tags used previously.  
%%% Use \imageat{} and \imagedot{} instead of "@" and "." in
%%% email addresses-this replaces the symbols with graphics to avoid 
%%% e-mail address harvesting from the .pdf file
\begin{dajauthors}
\begin{authorinfo}[xs]
  Xuancheng Shao\\
  University of Kentucky\\
  Lexington, USA\\
  Xuancheng\imagedot{}Shao\imageat{}uky\imagedot{}edu \\
  \url{http://www.ms.uky.edu/~xsh228/}
\end{authorinfo}
\end{dajauthors}

\end{document}